\title{On algebras of strongly derived unbounded type}
\author{Chao Zhang}
\address{Chao Zhang\\ Department of Mathematics\\ Guizhou University\\ Guiyang 550025\\ P.R. China.}
\address{Faculty of Mathematics, Bielefeld University, 33501 Bielefeld, Germany}
\email{zhangc@amss.ac.cn}
\thanks{2010 {\em Mathematics Subject Classification.} 16E35; 16G60; 16E05; 16G20}
\thanks{{\em Key words and phrases.} repetitive algebras; bounded derived categories; .}
\newtheorem{lemma}{Lemma}[section]
\newtheorem{proposition}[lemma]{Proposition}
\newtheorem{corollary}[lemma]{Corollary}
\newtheorem{theorem}[lemma]{Theorem}
\theoremstyle{remark}
\newtheorem{remark}[lemma]{Remark}
\theoremstyle{definition}
\newtheorem{definition}[lemma]{Definition}
\numberwithin{equation}{section}
\renewcommand{\mod}{\operatorname{mod}\nolimits}
\newcommand{\Supp}{\operatorname{Supp}\nolimits}
\newcommand{\proj}{\operatorname{proj}\nolimits}
\newcommand{\rad}{\operatorname{rad}\nolimits}
\newcommand{\Mod}{\operatorname{Mod}\nolimits}
\newcommand{\End}{\operatorname{End}\nolimits}
\newcommand{\Hom}{\operatorname{Hom}\nolimits}
\newcommand{\Image}{\operatorname{Im}\nolimits}
\newcommand{\Ker}{\operatorname{Ker}\nolimits}
\renewcommand{\dim}{\operatorname{dim}\nolimits}
\newcommand{\hl}{\mathrm{hl}}
\newcommand{\hw}{\mathrm{hw}}
\newcommand{\hr}{\mathrm{hr}}
\begin{document}

\begin{abstract}
Let $A$ be a finite-dimensional algebra
over an algebraically closed field. We prove
$A$ is a strongly derived unbounded algebra if and
only if there exists an integer $m$, such that $C_m(\proj A)$, the category
of all minimal projective complexes with degree concentrated in $[0, m]$,
is of strongly unbounded type, which is also equivalent to the statement
the repetitive algebra $\hat{A}$ is of
strongly unbounded representation type.
As a corollary, we
can establish the dichotomy on the representation type of $C_m(\proj A)$,
the homotopy category $K^b(\proj A)$ and the repetitive algebra $\hat{A}$.

\end{abstract}


\maketitle

\setcounter{tocdepth}{1}

\section*{Introduction}

Throughout this article, $k$ is an algebraically closed field and all the algebras are
associative finite dimensional connected basic $k$-algebras with identity.
During the research of representation theory of algebras, one of the main topics is to study
the representation type. As early as 1940s, Brauer and Thrall began
the investigation of representation type of finite dimensional algebras \cite{Br41, Th47}.
Jans formulated the first and second Brauer-Thrall conjectures for finite dimensional
algebras in his paper \cite{Jan57}, roughly speaking, the first Brauer-Thrall conjecture
says that an algebra is of bounded representation type if and only if
it is of finite representation type, whereas the second Brauer-Thrall conjecture
states that the algebras of unbounded
representation type are of strongly unbounded representation type.
Here, we say an algebra is {\it of bounded representation type} if the dimensions of all
indecomposable modules have a common upper bound, and {\it
of strongly unbounded representation type} if there are infinitely
many $d \in \mathbb{N}$ such that for each $d$, there exist infinitely many
isomorphism classes of indecomposable modules of dimension $d$.
 The study of the Brauer-Thrall conjectures, to a large extent,
 stimulated the development of representation theory
 \cite{Aus74, Bau85, NR75, Ro68, Ro78}.

During the last years, the bounded derived categories of algebras have
been studying extensively and play an important role in representation theory
of finite-dimensional algebras.
By a theorem from \cite{Hap88}, there is a full embedding from the
bounded derived category of a finite-dimensional algebra
to the stable module category over its repetitive algebra, which is
an equivalence if and only if the its global dimension is finite.
The theorem bridged together the bounded derived category and the
module category, and hence provided a method
to explore the property of bounded derived category of algebras in terms of
their repetitive algebra, like the
derived representation type \cite{dPena98, GK02}.
Moreover, the classification and distribution of indecomposable objects in the
bounded derived category of an algebra are still important themes
in representation theory of algebras. In this context, the definitive work was
due to Vossieck \cite{Vo01}. He introduced and classified {\it derived discrete algebras}, i.e., the
algebras whose bounded derived categories admit only finitely many
isomorphism classes of indecomposable objects of arbitrarily given
cohomology dimension vector, and proved an algebra is derived discrete if and
only if its repetitive algebra is discrete. Bautista \cite{Bau06} generalized the
definition of derived discrete for the artin algebras.
Motivated by Vossieck's work, Han and Zhang introduced the cohomological range of a
bounded complex, which leads to the concept of strongly derived unbounded algebras
naturally. We say an algebra is {\it strongly derived unbounded} if
there are infinitely
many $r \in \mathbb{N}$ such that for each $r$, there exist infinitely many
isomorphism classes of indecomposable object of cohomological range $r$ in
its bounded derived category.  Moreover, the authors proved
an algebra is either derived discrete or strongly derived unbounded \cite{HZ13}.

During the research of bounded derived category of algebras,
a high emphasis has been placed another category, i.e.,
the category of all minimal complexes
of finitely generated projective modules with degree
concentrated in $[0, m]$, for any fixed integer $m\geq 0$,
and we denote it by $C_m(\proj A)$. Bautista, Souto Salorio and Zuazua
described the AR-triangles in $C_m(\proj A)$, and also observed
their relation with the
AR-triangles in $K^{-,b}(\proj A)$, the homotopy category of 
all right bounded projective complexes with bounded cohomology \cite{BSZ05}.
Moreover in \cite{Bau06}, Bautista established that,
if $k$ is infinite, then a finite-dimensional $k$-algebra is derived discrete if and only if
for any integer $m$, the category $C_m(\proj A)$ does not contain generic objects.
For the representation type, Bautista defined the finite, tame and wild
representation type for $C_m(\proj A)$, and then proved that
$C_m(\proj A)$ is either of tame representation type or of wild representation
type \cite{Bau07}. Futhermore, $A$ is derived discrete if and only if $C_m(\proj A)$ is
of finite representation type for all $m$. In present paper, we first define the strongly
unboundedness of the category $C_m(\proj A)$ for any fixed integer $m$,
and study the strongly unbounded algebras in terms of the associated category
$C_m(\proj A)$ and the representation type of repetitive algebras.
We prove the following

\medskip

{\bf Theorem.} {\it Let $A$ be a finite-dimensional algebra. Then the following
statements are equivalent

{\rm(1)} $A$ is strongly derived unbounded;

{\rm(2)} There exists an integer
$m\geq 1$, such that the category $C_m(\proj A)$ is of strongly unbounded type.

{\rm(3)}  $K^b(\proj A)$ is of strongly unbounded type;

{\rm(4)} The repetitive algebra $\hat{A}$ is of strongly unbounded representation type .}

\medskip

Consider the dichotomy theorem from \cite{HZ13}, we know any algebra $A$ is
derive discrete or strongly derived unbounded. Combined with the equivalent
characterizations of derived discrete algebras from \cite{Bau07, Vo01}, we
can establish the dichotomy on the representation type of $C_m(\proj A)$,
the homotopy category $K^b(\proj A)$ and the repetitive algebra $\hat{A}$
as a corollary.

\medskip

{\bf Corollary} {\it Let $A$ be an algebra. Then we have

{\rm (1)} $C_m(\proj A)$ is of finite representation type for any 
$m$, or there exists an integer
$m'\geq 1$, such that $C_{m'}(\proj A)$ is of strongly unbounded type.

{\rm (2)} $K^b(\proj A)$ is either discrete or of strongly unbounded type;

{\rm (3)} The repetitive algebra $\hat{A}$ is either of discrete representation type
or strongly unbounded representation type.  }

 \medskip

The present paper is organized as follows. In the first section, we
define the strongly unboundedness of $C_m(\proj A)$ and prove
some basic lemmas. In section 2, we observe the
strongly unboundedness of $C_m(\proj A)$
under the derived equivalences and cleaving functors. Moreover,
we study $C_m(\proj A)$ for
representation-infinite algebras, simply connected algebras and
finally prove the main theorem.

\medskip
\noindent{\bf Acknowledgements } The author would like to thank Yang Han and Henning Krause,
for their helpful discussions on this topic.

\section{The strongly unboundedness of $C_m(\proj A)$}

\subsection{Notations and definitions}

Let $A$ be an algebra,  and $\mod A$ be the category of all
finite-dimensional right $A$-modules and $\proj A$ be its full
subcategory consisting of all finitely generated projective right
$A$-modules. Denote by $C(A)$ the category of all complexes of
finite-dimensional right $A$-modules, and by $C^b(A)$ and
$C^{-,b}(A)$ its full subcategories consisting of all bounded
complexes and right bounded complexes with bounded cohomology
respectively. Denote by $C^b(\proj A)$ and $C^{-,b}(\proj A)$ the
full subcategories of $C^b(A)$ and $C^{-,b}(A)$ respectively
consisting of all complexes of finitely generated projective
modules. Denote by $K(A)$, $K^b(\proj A)$ and $K^{-,b}(\proj A)$ the
homotopy categories of $C(A)$, $C^b(\proj A)$ and $C^{-,b}(\proj A)$
respectively. Moreover, $D^b(A)$ is the bounded derived category of
$\mod A$.

From \cite{HZ13}, for any complex $X^{\bullet} \in
D^b(A)$, the
 {\it cohomological length}  is $$\hl(X^{\bullet}) := \max\{\dim H^i(X^{\bullet}) \; | \;
i \in \mathbb{Z}\}, $$
the {\it cohomological width} of $X^{\bullet}$
is $$\hw(X^{\bullet}) := \max\{j-i+1 \; | \; H^i(X^{\bullet}) \neq 0 \neq H^j(X^{\bullet})\},$$
and the {\it cohomological range} of $X^{\bullet}$
is $$\hr(X^{\bullet}) := \hl(X^{\bullet}) \cdot \hw(X^{\bullet}).$$
Note that these numerical invariants preserve under shifts and isomorphisms. Moreover,
the dimension of an $A$-module $M$ is equal to the
cohomological range of the stalk complex with $M$ in degree $0$.

\begin{definition}
{\cite[Def.5]{HZ13}  An algebra $A$ is said to be
{\it strongly derived unbounded} or {\it of strongly derived unbounded type} if
there is an increasing sequence $\{r_i \; | \; i \in \mathbb{N}\}
\subseteq \mathbb{N}$ such that for each $r_i$, up to shifts and
isomorphisms, there are infinitely many indecomposable objects in
$D^b(A)$ of cohomological range $r_i$.  } \end{definition}

Recall that a complex $X^{\bullet}=(X^i, d^i) \in C^b(A)$ is said to
be {\it minimal} if $\Image d^i \subseteq \rad X^{i+1}$ for all $i \in
\mathbb{Z}$, and the {\it width} of $X^{\bullet}$
is $$w(X^{\bullet}) := \max\{j-i+1 \; | \; X^j \neq 0 \neq X^i\}.$$
For any integer $m\geq 0$,
$C_m(\proj A)$ is the subcategory of $C^b(\proj A)$ consisting of all minimal complexes
$P^{\bullet}=(P^i, d^i)$ such that $P^i=0$ for any
$i\notin\{0, 1, \cdots, m\}$.  Following \cite{Bau06, Bau07},
for $P^{\bullet}\in C_m(\proj A)$,
we put the {\it dimension} of $P^{\bullet}$ is
$$\dim(P^{\bullet})=\sum_{i=0}^m \dim P^i.$$ Now we shall define
the strongly unboundedness of $C_m(\proj A)$.

\begin{definition}
{\rm Let $A$ be an algebra and $m\geq 1$ be an integer.
The category $C_m(\proj A)$ is said to be {\it strongly unbounded} or
{\it of strongly unbounded type} if
there is an increasing sequence $\{d_i \; | \; i \in \mathbb{N}\}
\subseteq \mathbb{N}$ such that for each $d_i$, up to
isomorphisms, there are infinitely many indecomposable objects in
$C_m(\proj A)$ of dimension $d_i$.} \end{definition}

\begin{remark}
Since for any algebra $A$ and fixed integer $m$, there is a full embedding
from the category $C_m(\proj A)$ to  $C_{m+1}(\proj A)$, the strongly unboundedness
of $C_m(\proj A)$ implies that the category $C_{m+1}(\proj A)$ is of strongly
unbounded type. In particular, the statement
$C_m(\proj A)$ is strongly unbounded for some integer $m$ is equivalent to that
$C_m(\proj A)$ is of strongly unbounded type for all but finitely many $m$.
\end{remark}

We need two lemmas in the following.

\begin{lemma}
\label{lemma-dim-control}{\rm(See \cite[Lemma 2.2]{Bau06})} Let $A$ be an algebra with
$\dim A=d$ and $P^{\bullet}\in C_m(\proj A)$ such that $\hl(P^{\bullet})=c$. Then
for any $i\in [0, m]$, we have
$$\dim P^i\leq c(d+d^2+\cdots +d^{m-i+1}).$$
\end{lemma}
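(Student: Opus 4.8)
The plan is to run a downward induction on the degree $i$, from $i=m$ to $i=0$, bounding $\dim P^i$ in terms of $\dim P^{i+1}$ and the cohomological length $c$. The first reduction is elementary: for any projective $A$-module $P$ one has $\dim P\leq d\cdot\dim(P/\rad P)$, since writing $P$ as a direct sum of $n$ indecomposable projectives (each of dimension at most $\dim A=d$) exhibits $P/\rad P$ as a direct sum of $n$ simple modules, hence of dimension at least $n$. So it suffices to prove $\dim(P^i/\rad P^i)\leq \dim P^{i+1}+c$ for all $i$, and then the claimed geometric-sum bound follows by multiplying by $d$ and iterating.

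For the base case $i=m$ we have $H^m(P^{\bullet})=\Coker d^{m-1}=P^m/\Image d^{m-1}$, and minimality of $P^{\bullet}$ gives $\Image d^{m-1}\subseteq\rad P^m$; hence $P^m/\rad P^m$ is a quotient of $H^m(P^{\bullet})$, so $\dim(P^m/\rad P^m)\leq \hl(P^{\bullet})=c$ and $\dim P^m\leq cd$, which is the asserted bound for $i=m$. For the inductive step, apply $-/\rad(-)$ to the short exact sequence $0\to\Ker d^i\to P^i\to\Image d^i\to 0$: this yields a surjection $P^i/\rad P^i\twoheadrightarrow \Image d^i/\rad(\Image d^i)$ whose kernel is isomorphic to $\Ker d^i/(\Ker d^i\cap\rad P^i)$. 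By minimality $\Image d^{i-1}\subseteq\rad P^i$, so $\Image d^{i-1}\subseteq \Ker d^i\cap\rad P^i$ and this kernel is in turn a quotient of $H^i(P^{\bullet})=\Ker d^i/\Image d^{i-1}$, hence has dimension at most $c$. Since $\dim(\Image d^i/\rad(\Image d^i))\leq\dim\Image d^i\leq\dim P^{i+1}$, we get $\dim(P^i/\rad P^i)\leq\dim P^{i+1}+c$, so $\dim P^i\leq d(\dim P^{i+1}+c)$; feeding in the inductive bound $\dim P^{i+1}\leq c(d+\cdots+d^{m-i})$ produces $\dim P^i\leq c(d+d^2+\cdots+d^{m-i+1})$, completing the induction.

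The only point that requires genuine care is the identification, via minimality, of the kernel of the induced map on tops as a quotient of $H^i(P^{\bullet})$ and not of the a priori larger module $\Ker d^i$; everything else is bookkeeping. The hypothesis that $k$ is algebraically closed enters only through the inequality $\dim P\leq d\cdot\dim(P/\rad P)$ for projectives, and would survive, up to a harmless constant, over an arbitrary field.
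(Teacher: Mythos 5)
Your proof is correct and follows essentially the same route as the paper: both arguments reduce, via minimality ($\Image d^{i-1}\subseteq\rad P^{i}$) and the bound $\dim P\leq d\cdot\dim(P/\rad P)$ for projectives, to the recursion $\dim P^{i}\leq d\,(\dim P^{i+1}+c)$ and then iterate downward from $i=m$. The only cosmetic difference is that the paper bounds $\dim(P^{i}/\rad P^{i})$ by $\dim(P^{i}/\Image d^{i-1})=\dim\Image d^{i}+\dim H^{i}(P^{\bullet})$ directly, whereas you split the top of $P^{i}$ along the exact sequence $0\to\Ker d^{i}\to P^{i}\to\Image d^{i}\to 0$; both yield the same estimate.
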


\begin{proof}
Since $P^{\bullet}=(P^i, d^i)$ is a minimal complex, i.e.,
$\Image d^i \subseteq \rad P^{i+1}$, then for any $i\in [0,m]$,
$$\begin{aligned} \dim P^i
\leq& \dim A\cdot\dim (P^i/\rad P^i)\\
\leq& \dim A\cdot\dim (P^i/\Image d^{i-1}) \\
= &  \dim A\cdot\big(\dim (P^i/\Ker d^i)+ \dim (\Ker d^i/\Image d^{i-1})\big)\\
= &  \dim A\cdot\big(\dim \Image d^{i}+ \dim H^i(P^{\bullet})\big)\\
\leq &\dim A\cdot\big(\dim P^{i+1}+ \dim H^i(P^{\bullet})\big)\\
\leq &d\cdot\big(\dim P^{i+1}+ c\big).
\end{aligned}$$
Then we can get the inequality as required recursively.
\end{proof}

\begin{lemma}
\label{lemma-iso}
Let $A$ be an algebra and $m\geq 0$ be an integer.
Suppose $P^{\bullet}, Q^{\bullet}$ are two objects in $C_m(\proj A)$.
Then

{\rm (1)} $P^{\bullet}$ is indecomposable in $C_m(\proj A)$ if and
only if it is indecomposable as an object in $D^b(A)$.

{\rm (2)} $P^{\bullet}\cong Q^{\bullet}$ in $C_m(\proj A)$ if and
only if $P^{\bullet}\cong Q^{\bullet}$ as objects in $D^b(A)$.
\end{lemma}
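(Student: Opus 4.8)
The plan is to prove both statements simultaneously by exhibiting a functor between $C_m(\proj A)$ and $D^b(A)$ that is the identity on objects (the natural inclusion, composing a minimal complex with the quotient functor $C^b(\proj A)\to K^b(\proj A)\to D^b(A)$) and showing it is fully faithful when restricted to objects of $C_m(\proj A)$. Once full faithfulness is established, part (2) follows because an isomorphism of objects in a full subcategory is the same as an isomorphism in the ambient category, and part (1) follows from (2) by the standard fact that indecomposability is detected by idempotents in the endomorphism ring: $\End_{C_m(\proj A)}(P^{\bullet}) \cong \End_{D^b(A)}(P^{\bullet})$ as rings, so one has a nontrivial idempotent on one side iff one has it on the other. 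So everything reduces to the following claim: \emph{for minimal complexes $P^{\bullet}, Q^{\bullet}$ concentrated in degrees $[0,m]$, the canonical map $\Hom_{C^b(\proj A)}(P^{\bullet}, Q^{\bullet}) \to \Hom_{D^b(A)}(P^{\bullet}, Q^{\bullet})$ is bijective.}

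For the factorization $\Hom_{C^b(\proj A)} \to \Hom_{K^b(\proj A)} \to \Hom_{D^b(A)}$, the second map is always an isomorphism since $K^b(\proj A)\to D^b(A)$ is fully faithful (projective complexes are $\Hom$-acyclic for the relevant localization, or equivalently $K^b(\proj A)$ embeds in $D^b(A)$). So the content is the first map: $\Hom_{C^b(\proj A)}(P^{\bullet},Q^{\bullet}) \to \Hom_{K^b(\proj A)}(P^{\bullet},Q^{\bullet})$, i.e. no nonzero chain map between minimal complexes in the same degree window is null-homotopic. First I would take a null-homotopic chain map $f^{\bullet}: P^{\bullet}\to Q^{\bullet}$, with homotopy $(s^i: P^i \to Q^{i-1})$, so $f^i = d_Q^{i-1}s^i + s^{i+1}d_P^i$. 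Because both complexes vanish outside $[0,m]$, the homotopy components $s^i$ are automatically zero for $i\le 0$ and $i\ge m+1$ (their source or target is zero). The key point is minimality: $\Image d_Q^{i-1}\subseteq \rad Q^i$ and $\Image d_P^i \subseteq \rad P^{i+1}$, so writing $f^i$ in terms of $s^i, s^{i+1}$ shows $f^i$ has image inside... actually one argues $f^i \equiv 0 \pmod{\rad}$ only gives $f^i(P^i)\subseteq \rad Q^i$, which is not immediately zero; the sharper argument is to use that a null-homotopic map between minimal complexes, when you compose with the radical filtration, must vanish — this is the classical lemma that minimal complexes have no nonzero null-homotopic endomorphisms in a fixed degree range because any homotopy $s$ can be "corrected" to zero using minimality, precisely because $d_Q^{i-1} s^i$ lands in $\rad Q^i$ and $s^{i+1} d_P^i$ lands in the radical-image too, forcing $f^i$ to be radical-small, and then an induction on the radical layers shows $f^{\bullet}$ itself is null-homotopic via a \emph{zero} homotopy only after observing there's nothing to subtract off at the top degree. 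I would lean on \cite{BSZ05} or the standard reference that $C_m(\proj A)$ is a full subcategory of $K^b(\proj A)$.

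The cleanest route, which I would actually write up, avoids the delicate radical induction: the full subcategory of $K^b(\proj A)$ consisting of minimal complexes concentrated in $[0,m]$ is equivalent, via the obvious functor, to $C_m(\proj A)$ — this is because (a) every object of $K^b(\proj A)$ that is isomorphic there to a complex concentrated in $[0,m]$ has a minimal representative in $C_m(\proj A)$ (split off contractible summands), and (b) two minimal complexes in $C_m(\proj A)$ that are homotopy equivalent are already isomorphic in $C_m(\proj A)$, and more generally the homotopy relation restricted to $\Hom$-sets between such minimal complexes is trivial. Statement (b) is exactly the classical uniqueness of minimal projective resolutions / minimal complexes (see \cite{BSZ05}, and it is the derived-category analogue of the fact that a minimal projective presentation is unique up to isomorphism). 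Granting this equivalence $C_m(\proj A)\simeq$ (full subcategory of $K^b(\proj A)$) $\hookrightarrow D^b(A)$ fully faithfully, both (1) and (2) are immediate: isomorphisms and indecomposability are preserved and reflected by fully faithful functors.

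The main obstacle is pinning down precisely why the homotopy relation is trivial on these $\Hom$-sets, i.e., establishing the triviality of null-homotopic maps between minimal complexes in a bounded degree window rigorously rather than hand-waving "by minimality." I expect to handle it by the radical-layer induction sketched above: if $f^{\bullet}$ is null-homotopic via $(s^i)$, then minimality forces $\Image f^i \subseteq \rad^{\,t} Q^i$ for every $t$ by induction (at each stage replacing $s$ by a homotopy with image in a deeper radical layer), and since $Q^i$ is finite-dimensional, $\rad^{\,t} Q^i = 0$ for $t \gg 0$, whence $f^{\bullet} = 0$. The bounded window $[0,m]$ is what makes the induction terminate cleanly with no boundary contributions. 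I would present this as a short self-contained paragraph, citing \cite{BSZ05} for the parallel statement, and then deduce (1) and (2) formally.
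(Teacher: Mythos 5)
Your reduction of both parts to the claim that the canonical functor $C_m(\proj A)\to K^b(\proj A)$ is \emph{fully faithful} --- equivalently, that no nonzero chain map between minimal complexes concentrated in $[0,m]$ is null-homotopic --- does not work, because that claim is false. Take $A=k[x]/(x^2)$ and let $P^{\bullet}$ be the minimal complex $A\xrightarrow{\;\cdot x\;}A$ concentrated in degrees $0$ and $1$. The endomorphism $f^{\bullet}=(\cdot x,\cdot x)$ is a nonzero chain map, and it is null-homotopic via the homotopy $s=\mathrm{id}\colon P^1\to P^0$, since $sd=\cdot x=ds$. This also kills the proposed radical-layer induction: minimality does give $\Image f^i\subseteq\rad Q^i$ at the first step (both $d_Q^{i-1}s^i$ and $s^{i+1}d_P^i$ land in $\rad Q^i$), but there is no mechanism to push the image into $\rad^2 Q^i$; in the example $\Image f^i=\rad A$ exactly, and the induction stalls after one step. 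So $C_m(\proj A)$ is \emph{not} a full subcategory of $K^b(\proj A)$, and neither isomorphism of objects nor indecomposability can be transferred ``for free'' along a fully faithful embedding. The ``classical lemma'' you want to cite does not exist in this form; what \cite{BSZ05} and the uniqueness of minimal complexes give is only part (2) itself, not triviality of the homotopy relation on $\Hom$-sets.

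What is true, and what the paper's proof actually uses, is the weaker statement that null-homotopic maps between minimal complexes lie in the \emph{radical}: a null-homotopic endomorphism of a minimal $P^{\bullet}\in C_m(\proj A)$ belongs to $\rad\End_{C(A)}(P^{\bullet})$, hence is nilpotent. This already suffices. For (1) one gets $\End_{K(A)}(P^{\bullet})/\rad\End_{K(A)}(P^{\bullet})\cong\End_{C(A)}(P^{\bullet})/\rad\End_{C(A)}(P^{\bullet})$, so one endomorphism ring is local if and only if the other is, and indecomposability in $C_m(\proj A)$, $K^b(\proj A)$ and $D^b(A)$ coincide (the last comparison using that $K^b(\proj A)\to D^b(A)$ is fully faithful and $D^b(A)\simeq K^{-,b}(\proj A)$ is Krull--Schmidt). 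For (2) one first upgrades a $D^b$-isomorphism to a homotopy equivalence $f^{\bullet}$ with homotopy inverse $g^{\bullet}$ (this part of your outline is fine), and then observes that $1-g^{\bullet}f^{\bullet}$ and $1-f^{\bullet}g^{\bullet}$, being null-homotopic, are nilpotent, so $g^{\bullet}f^{\bullet}$ and $f^{\bullet}g^{\bullet}$ are units in the respective endomorphism rings and $f^{\bullet}$ is an isomorphism already in $C_m(\proj A)$. You should replace the full-faithfulness claim by this radical/nilpotency argument; as written, the central lemma your proof rests on is false.
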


\begin{proof}
(1) Since $D^b(A) \simeq K^{-,b}(\proj A)$, which is Krull-Schmidt,
the complex $P^{\bullet}$ is indecomposable in $D^b(A)$ if and only if
it is an indecomposable complex in $K^b(\proj A)$,
and if and only if its endomorphism algebra $\End_{K(A)}(P^{\bullet})$ is
a local algebra. Moreover,
since the complex $P^{\bullet}$ is minimal, all null homotopic cochain maps in
$\End_{C(A)}(P^{\bullet})$ are in $\rad
\End_{C(A)}(P^{\bullet})$. Thus
$$\End_{K(A)}(P^{\bullet}) / \rad
\End_{K(A)}(P^{\bullet}) \linebreak \cong
\End_{C(A)}(P^{\bullet})/\rad
\End_{C(A)}(P^{\bullet}),$$ which implies
$P^{\bullet}$ is indecomposable in $K^b(\proj A)$ if and only if it is
indecomposable in $C_m(\proj A)$.

(2) If $P^{\bullet}\cong Q^{\bullet}$ in $C_m(\proj A)$, then they are
isomorphic in $D^b(A)$. Conversely, suppose
$P^{\bullet}\cong Q^{\bullet}$ in $D^b(A)$ and there is a quasi-isomorphism
$f^{\bullet}: P^{\bullet}\rightarrow Q^{\bullet}$. Then
we have a triangle in $K(A)$
$$P^{\bullet}\stackrel{f^{\bullet}}{\rightarrow} Q^{\bullet}
\rightarrow L^{\bullet}\rightarrow P^{\bullet}[1]$$
such that $L^{\bullet}$ is an acyclic complex. Applying
$\Hom_{K(A)}(Q^{\bullet}, -)$ to the triangle, we have an isomorphism
$\Hom_{K(A)}(Q^{\bullet}, P^{\bullet})\cong \Hom_{K(A)}(Q^{\bullet}, Q^{\bullet})$
induced by $f^{\bullet}$ since $\Hom_{K(A)}(Q^{\bullet}, L^{\bullet})=0$, which
implies $f^{\bullet}$ is a split epimorphism in $K(A)$.
Note that $P^{\bullet}$ and $Q^{\bullet}$ are quasi-isomorphic. 
Then $f^{\bullet}$ is a chain homotopy equivalence, i.e.,
there is a morphism $g^{\bullet}$ such that $1-g^{\bullet}f^{\bullet}$ and
$1-f^{\bullet}g^{\bullet}$ are null homotopic. Since $P^{\bullet}, Q^{\bullet}$
are minimal, all null-homotopic chain maps are nilpotent. Thus
$f^{\bullet}$ and $g^{\bullet}$ are split monomorphisms in $C_m(\proj A)$.
Therefore, $P^{\bullet}\cong Q^{\bullet}$ in $C_m(\proj A)$.
\end{proof}

The following lemma implies the strongly unboundedness of $C_m(\proj A)$
can be defined in terms of the cohomological range as well.

\begin{lemma}\label{lemma-dim-hr}
Let $A$ be an algebra and $m\geq 1$ be an integer.
The category $C_m(\proj A)$ is strongly unbounded
if and only if there is an increasing sequence $\{r_i \; | \; i \in \mathbb{N}\}
\subseteq \mathbb{N}$ such that for each $r_i$, up to
isomorphisms, there are infinitely many indecomposable objects in
$C_m(\proj A)$ of cohomological range $r_i$.
\end{lemma}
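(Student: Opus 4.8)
The plan is to show that, for nonzero objects of $C_m(\proj A)$, the dimension and the cohomological range are linearly comparable, with constants depending only on $A$ and $m$; the equivalence of the two formulations of strong unboundedness then falls out of the pigeonhole principle.

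First I would record the elementary estimates. Put $d=\dim A$ and $N:=\sum_{i=0}^m\big(d+d^2+\cdots+d^{m-i+1}\big)$, a constant depending only on $A$ and $m$. For a nonzero $P^{\bullet}\in C_m(\proj A)$ one has $\hl(P^{\bullet})\geq 1$, since Lemma~\ref{lemma-dim-control} with $c=\hl(P^{\bullet})=0$ would otherwise force $P^{\bullet}=0$ (minimality is what is used here, just as in Lemma~\ref{lemma-iso}); summing the inequality of Lemma~\ref{lemma-dim-control} over $i\in[0,m]$ then gives $\dim(P^{\bullet})\leq N\,\hl(P^{\bullet})$. Since each $H^i(P^{\bullet})$ is a subquotient of $P^i$ we get $\hl(P^{\bullet})\leq\dim(P^{\bullet})$, and since the cohomology of $P^{\bullet}$ is concentrated in degrees $[0,m]$ we have $1\leq\hw(P^{\bullet})\leq m+1$. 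Using $\hr(P^{\bullet})=\hl(P^{\bullet})\cdot\hw(P^{\bullet})$, these combine into
\[
\frac{1}{m+1}\,\hr(P^{\bullet})\;\leq\;\hl(P^{\bullet})\;\leq\;\dim(P^{\bullet})\;\leq\;N\,\hl(P^{\bullet})\;\leq\;N\,\hr(P^{\bullet})
\]
for every nonzero $P^{\bullet}\in C_m(\proj A)$, so that $\dim$ and $\hr$ differ by at most the fixed factors $m+1$ and $N$.

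For the forward direction, assume $C_m(\proj A)$ is strongly unbounded, witnessed by an increasing sequence $\{d_i\}$. For each $d_i$, every indecomposable $P^{\bullet}$ with $\dim(P^{\bullet})=d_i$ has $\hr(P^{\bullet})$ among the finitely many integers in $[\,d_i/N,\ (m+1)d_i\,]$; since there are infinitely many such $P^{\bullet}$, the pigeonhole principle produces an integer $r_i$ in this range realized by infinitely many of them. As $\{d_i\}$ is increasing it is unbounded, and $r_i\geq d_i/N$ forces $r_i\to\infty$, so after passing to a subsequence $\{r_i\}$ is strictly increasing; this is the sequence demanded by the cohomological-range formulation. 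The converse is symmetric: from an increasing sequence $\{r_i\}$ as in the statement, every indecomposable of cohomological range $r_i$ has dimension among the finitely many integers in $[\,r_i/(m+1),\ N r_i\,]$, and pigeonhole together with $r_i\to\infty$ yields an increasing sequence of dimensions each realized by infinitely many indecomposables.

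I do not expect a real obstacle here; the content is entirely the linear comparability displayed above, and the rest is bookkeeping with the constant from Lemma~\ref{lemma-dim-control} and pigeonhole over finite sets of integers. The one point deserving care is the reason $\hl(P^{\bullet})\geq 1$ (equivalently, that a nonzero object of $C_m(\proj A)$ has nonvanishing cohomology), which is again a consequence of minimality.
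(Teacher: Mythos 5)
Your proposal is correct and follows essentially the same route as the paper: both reduce the statement to the two-sided linear comparison $\frac{1}{m+1}\hr(P^{\bullet})\leq\dim(P^{\bullet})\leq N\cdot\hr(P^{\bullet})$ obtained from Lemma~\ref{lemma-dim-control}, and then extract the required increasing sequences by pigeonhole/inductive selection. Your additional remarks (that $\hl\geq 1$ for nonzero minimal complexes and that one must pass to a subsequence to make the $r_i$ strictly increasing) are correct points of care that the paper handles implicitly.
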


\begin{proof} Suppose there is an increasing sequence $\{r_i \; | \; i \in \mathbb{N}\}
\subseteq \mathbb{N}$ and pairwise non-isomorphic objects
$\{P^{\bullet}_{ij}\; | \; i, j\in \mathbb{N} \}$ in
$C_m(\proj A)$ such that $\hr(P^{\bullet}_{ij})=r_i$.
Note that for any object
$P^{\bullet}\in C_m(\proj A)$, $\hr(P^{\bullet})\leq (m+1)\cdot
\dim(P^{\bullet})$.  Moreover by Lemma \ref{lemma-dim-control},
$\dim(P^{\bullet})\leq \hr(P^{\bullet})\cdot (m+1)\cdot (d+d^2+\cdots +d^{m+1})$. Set
$N=(m+1)\cdot (d+d^2+\cdots +d^{m+1})$, then for any $i, j\in \mathbb{N}$,
we have
$$\frac{1}{m+1} \cdot \hr(P_{ij}^{\bullet}) \leq \dim(P_{ij}^{\bullet})
\leq N \cdot \hr(P_{ij}^{\bullet}).$$
In order to show $C_m(\proj A)$ is of strongly unbounded type, we shall
find inductively an increasing sequence
$\{d_i \; | \; i \in \mathbb{N}\} \subseteq \mathbb{N}$ and
infinitely many indecomposable objects $\{Q_{ij}^{\bullet}\in C_m(\proj A)
\; | \; i, j \in \mathbb{N}\}$ which are pairwise different up to
isomorphism such that $\dim(Q_{ij}^{\bullet}) = d_i$ for
all $j \in \mathbb{N}$. For $i=1$,
$0<\dim(P^{\bullet}_{1j})\leq Nr_1$. Then there is $0<d_1\leq Nr_1$ and
infinitely many indecomposable objects $\{Q^{\bullet}_{1j} \; | \; j \in
\mathbb{N}\} \subseteq \{P_{1j}^{\bullet} \; | \; j \in
\mathbb{N}\} $ of dimension $d_1$. Assume that we have found $d_i$. We choose some $r_l$
with $r_l > (m+1) \cdot d_i$. Since
$$d_i < \frac{1}{m+1} \cdot r_l = \frac{1}{m+1} \cdot \hr(X_{lj}^{\bullet})
\leq \dim(P_{lj}^{\bullet}) \leq N \cdot \hr(X_{lj}^{\bullet}) = N
\cdot r_l,$$ we can choose $d_i < d_{i+1} \leq N \cdot r_l$ and
infinitely many indecomposable objects $\{Q^{\bullet}_{i+1,j} \; |
\; j \in \mathbb{N}\} \subseteq \{P_{lj}^{\bullet} \; | \; j \in
\mathbb{N}\}$ which are pairwise non-isomorphism
such that $\dim(Q^{\bullet}_{i+1,j}) = d_{i+1}$ for all
$j \in \mathbb{N}$.

Conversely we suppose $C_m(\proj A)$ is of strongly unbounded. Then we
can construct an increasing sequence $\{r_i \; | \; i \in \mathbb{N}\}
\subseteq \mathbb{N}$ and pairwise non-isomorphic objects
$\{Q^{\bullet}_{ij}\; | \; i, j\in \mathbb{N} \}$ such that
$\hr(Q^{\bullet}_{ij})=r_i$ in the similar way by the inequality
$$\frac{1}{N} \cdot \dim(P^{\bullet}) \leq \hr(P^{\bullet})
\leq (m+1) \cdot \dim(P^{\bullet}),$$ for any $P^{\bullet}\in C_m(\proj A).$
\end{proof}

\section{The proof of Theorem}

\subsection{Simply connected algebras}

Simply connected algebras play an important role in the
representation theory of algebras since any
representation-finite algebra can be transformed
to a simply connected algebra using covering
technique.  We first recall the definition of simply connected algebras
from \cite{AS88}. Fix a connected quiver $(Q, I)$ with $I$ admissible.
For any $\alpha\in Q_1$,
we write its formal inverse $\alpha^{-1}$ with source $s(\alpha^{-1})=t(\alpha)$ and
$t(\alpha^{-1})=s(\alpha)$. A walk in $Q$ is a path $w=w_1w_2\cdots w_n$ with
$w_i\in Q_1$ or $w_i^{-1}\in Q_1$ such that $s(w_{i+1})=t(w_i)$. An relation
$r=\sum_{i=1}^m t_iu_i\in I(m\geq 1)$ with $u_i$ pairwise distinct and $t_i\in k\setminus \{0\}$
is called {\it minimal} if $r=\sum_{i\in S} t_iu_i\notin I$ for any non-empty proper
subset $S\subset \{1, 2, \cdots, m\}$. The {\it homotopy relation} is the smallest
equivalence relation $\sim_I$ on the set of walks such that

(1) $\alpha\alpha^{-1}\sim_I e_x$ and $\alpha^{-1}\alpha\sim_I e_y$ for any
$x\stackrel{\alpha}{\rightarrow}y$;

(2) $u_1\sim_I u_2$ for any minimal relation $t_1u_1+t_2u_2+\cdots +t_mu_m$;

(3) $u\sim_I v$ implies $uw\sim_I vw$ and $wu\sim_I wv$ for any $w$.

\noindent The {\it fundamental group} $\Pi_1(Q, I, x_0)$ of $(Q, I)$ is defined to the group
consisting of homotopy classes of walks
from $x_0$ to $x_0$ for any vertex $x_0\in Q_0$ \cite{DM83}. Note that the definition
is independent of the choice of $x_0$, and we write $\Pi_1(Q, I)$ for short.
A triangular algebra $A$ is said to be {\it simply connected}
if for any presentation $A\cong kQ/I$, the fundamental group
$\Pi_1(Q, I)$ is trivial.

\medskip

The following lemma implies that
for a representation-infinite
algebra $A$, the category $C_1(\proj A)$
is of strongly unbounded type.

\begin{lemma}\label{lemma-rep-inf}
Let $A$ be a representation-infinite algebra. Then $C_1(\proj A)$ is of
strongly unbounded type.
\end{lemma}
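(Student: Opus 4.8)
The plan is to exploit the classical fact that a representation-infinite algebra carries, up to shifts, infinitely many indecomposable modules in infinitely many dimensions --- this is the Brauer--Thrall setup --- and to push these module families into $C_1(\proj A)$ via minimal projective presentations. More precisely, for each indecomposable $A$-module $M$, take its minimal projective presentation $P^{1}_M \xrightarrow{d_M} P^{0}_M \to M \to 0$ and view $P^\bullet_M = (\cdots 0 \to P^1_M \to P^0_M \to 0 \cdots)$ as an object of $C_1(\proj A)$ concentrated in degrees $0$ and $1$. Since the presentation is minimal, $\Image d_M \subseteq \rad P^0_M$, so $P^\bullet_M$ genuinely lies in $C_1(\proj A)$; moreover $H^0(P^\bullet_M) = M$ and $H^1(P^\bullet_M) = 0$, so these complexes are quasi-isomorphic to the stalk $M[0]$, hence pairwise non-isomorphic in $D^b(A)$ whenever the $M$'s are, and by Lemma~\ref{lemma-iso} they remain pairwise non-isomorphic and indecomposable in $C_1(\proj A)$.

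The next step is to control dimensions. We have $\dim(P^\bullet_M) = \dim P^0_M + \dim P^1_M$, and since $P^0_M, P^1_M$ are the projective covers of $M$ and of $\rad$-image data, their dimensions are bounded above and below by constant multiples of $\dim M$ (using that $\dim A = d$ is fixed: $\dim P^0_M \le d \cdot \dim(M/\rad M) \le d \cdot \dim M$, and similarly for $P^1_M$, while conversely $\dim M \le \dim P^0_M$). Thus there are constants $0 < c_1 \le c_2$ depending only on $A$ with
$$c_1 \cdot \dim M \le \dim(P^\bullet_M) \le c_2 \cdot \dim M.$$
Now I invoke the second Brauer--Thrall theorem, which holds over an algebraically closed field: a representation-infinite algebra is of strongly unbounded representation type, i.e.\ there is an increasing sequence $\{e_i\}$ of natural numbers and, for each $i$, infinitely many pairwise non-isomorphic indecomposable $A$-modules of dimension $e_i$. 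Feeding these modules through the presentation construction, the displayed inequality lets me pigeonhole --- exactly as in the proof of Lemma~\ref{lemma-dim-hr} --- to extract an increasing sequence $\{d_i\}$ and, for each $d_i$, infinitely many pairwise non-isomorphic indecomposables in $C_1(\proj A)$ of dimension $d_i$; note here that for fixed $e_i$ the values $\dim(P^\bullet_M)$ lie in the finite window $[c_1 e_i, c_2 e_i]$, so one of them is attained infinitely often.

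The main obstacle is really the appeal to the second Brauer--Thrall conjecture: it is a deep theorem (for finite-dimensional algebras over an algebraically closed field it follows from work of Nazarova--Ro{\u\i}ter, Bautista, and others), and the paper must either cite it cleanly or argue around it. A conceivable subtlety is whether ``representation-infinite'' as used here is meant to allow the tame case where one only gets one-parameter families --- but even in the tame case those families produce infinitely many indecomposables in each of infinitely many dimensions, so strongly unbounded type for $C_1(\proj A)$ still follows, and in the wild case it is immediate. A secondary point to check carefully is that distinct indecomposable modules can, in principle, yield isomorphic two-term minimal presentations only if the modules were isomorphic --- this is precisely because the minimal presentation is unique up to isomorphism and determines $M$ as its cokernel, so the assignment $M \mapsto P^\bullet_M$ is injective on isomorphism classes, and combined with Lemma~\ref{lemma-iso}(2) no collisions occur in $C_1(\proj A)$ either. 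Once these pieces are in place the conclusion is a routine pigeonhole argument of the type already carried out in Lemma~\ref{lemma-dim-hr}.
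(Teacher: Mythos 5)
Your overall strategy---feed the infinitely many indecomposable modules in each of infinitely many dimensions (Brauer--Thrall II) through minimal projective presentations, bound $\dim(P^{\bullet}_M)$ above and below by constant multiples of $\dim M$, and pigeonhole as in Lemma~\ref{lemma-dim-hr}---is exactly the paper's proof. The appeal to Brauer--Thrall II is likewise what the paper uses (it is the standard external input here), and your closing observation that $M$ is recovered as the cokernel of $d_M$ correctly secures pairwise non-isomorphy of the complexes in $C_1(\proj A)$.

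There is, however, one genuinely false step: the claim that $H^0(P^{\bullet}_M)=M$ and $H^1(P^{\bullet}_M)=0$, so that $P^{\bullet}_M$ is quasi-isomorphic to the stalk complex $M[0]$. A two-term minimal presentation is not a resolution: the complex $P^1_M\to P^0_M$ has cohomology $M$ in its top degree and $\Ker d_M\cong\Omega^2M$ in its bottom degree, and $\Omega^2M\neq 0$ in general (e.g.\ for any non-projective module over a self-injective algebra). Since you use this quasi-isomorphism to conclude that $P^{\bullet}_M$ is indecomposable in $D^b(A)$, and hence via Lemma~\ref{lemma-iso} in $C_1(\proj A)$, the indecomposability of your objects is left unjustified as written. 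The statement is nevertheless true; the paper covers it by citing \cite[Prop.\ 2]{HZ13}, and a direct argument is short: a decomposition $P^{\bullet}_M=Q^{\bullet}\oplus R^{\bullet}$ in $C_1(\proj A)$ induces $M=\Coker d_M=\Coker d_Q\oplus \Coker d_R$, so indecomposability of $M$ forces, say, $\Coker d_R=0$; then $d_R$ is surjective while $\Image d_R\subseteq \rad R^0$, so $R^0=0$; finally $R^1\subseteq\Ker d_M\subseteq\rad P^1_M$ by minimality of the presentation at the second step, and a projective direct summand contained in the radical is zero. With this point repaired, the rest of your pigeonhole argument goes through and coincides with the paper's.
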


\begin{proof}
Since $A$ is representation-finite, $A$ is of strongly unbounded type, i.e.,
there is an infinite sequence
$\{d_i \; | \; i \in \mathbb{N}\} \subseteq \mathbb{N}$ and
infinitely many indecomposable $A$-modules $\{M_{ij}
\; | \; i, j \in \mathbb{N}\}$ which are pairwise different up to
isomorphism such that $\dim(M_{ij}) = d_i$ for all $j \in \mathbb{N}$.
For any $M_{ij}$, we can take a minimal presentation
$P^{-1}\stackrel{d}{\rightarrow} P^0\rightarrow M_{ij}\rightarrow 0$. Let
$$P_{ij}^{\bullet}=\cdots\rightarrow 0\rightarrow P^{-1}\stackrel{d}{\rightarrow}
P^0\rightarrow 0\rightarrow \cdots$$
with $P^{-1}$ concentrated in degree 0. Then $P_{ij}^{\bullet}\in C_1(\proj A)$
is indecomposable by \cite[Prop.2]{HZ13} with $\dim H^1(P_{ij}^{\bullet})=d_i$.
Moreover, $P_{ij}^{\bullet}$
are non-isomorphic for different $i, j\in\mathbb{N}$.
Since $P_{ij}^{\bullet}$ is a minimal presentation of $M_{ij}$,
$\dim P^{-1}\leq (\dim A)^2\cdot d_i$ and we have
$$d_i\leq \hr(P_{ij}^{\bullet})\leq 2\cdot (\dim A)^2\cdot d_i.$$
With the similar argument in the proof of Lemma \ref{lemma-dim-hr},
we can construct a sequence $\{r_i \; | \; i \in \mathbb{N}\}
\subseteq \mathbb{N}$ and pairwise non-isomorphic objects
$\{Q^{\bullet}_{ij}\; | \; i, j\in \mathbb{N} \}$ such that
$\hr(Q^{\bullet}_{ij})=r_i$. Thus $C_1(\proj A)$ is of strongly unbounded
type by Lemma \ref{lemma-dim-hr}.
\end{proof}

The following lemma observe the strongly unboundedness of $C_m(\proj A)$ under the
derived equivalences.

\begin{proposition}\label{prop-der-equiv}
Let $A$ be an algebra with $C_m(\proj A)$ strongly unbounded for some
integer $m$ and $\mbox{\rm gl.dim} A<\infty$. If there is an
algebra $B$ derived equivalent to $A$, then
$C_{m'}(\proj B)$ is of strongly unbounded type for some integer
$m'$.
\end{proposition}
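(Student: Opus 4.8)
The plan is to carry the given family of indecomposables out of $C_m(\proj A)$ into $D^b(\mod A)$, transport it through the derived equivalence, and then represent the images by minimal projective complexes over $B$ lying inside a single $C_{m'}(\proj B)$, keeping control of cohomological widths and ranges throughout. First I would record that, as $B$ is derived equivalent to $A$ and $\operatorname{gl.dim} A<\infty$, also $\operatorname{gl.dim} B<\infty$ (finiteness of the global dimension is a derived invariant of finite-dimensional algebras); put $g=\operatorname{gl.dim} B$. Fix a derived equivalence $F=(-)\lotimes_A T\colon D^b(\mod A)\xrightarrow{\ \sim\ }D^b(\mod B)$ given by a two-sided tilting complex $T$, and let $F^{-1}=(-)\lotimes_B T'$ be a quasi-inverse.

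The one quantitative ingredient I need is an elementary estimate on how $F$ distorts the cohomological invariants: for every $X\in D^b(\mod A)$ the complex $X\lotimes_A T$ is concentrated in a window of width at most $\hw(X)+w(T)-1$ and in each degree has $k$-dimension at most $\hw(X)\cdot\hl(X)\cdot T_{\max}$, where $T_{\max}$ is the largest dimension of a component of $T$; hence $\hw(FX)\le\hw(X)+c$ and $\hl(FX)\le c\cdot\hr(X)$ for a constant $c=c(F)$, and symmetrically for $F^{-1}$ (compare \cite{HZ13}). These bounds are of use only when $\hw(X)$ is controlled, which is precisely what working inside $C_m(\proj A)$ provides.

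Now the transport itself. By Lemma~\ref{lemma-dim-hr} I would choose an increasing sequence $\{r_i\}\subseteq\mathbb{N}$ together with pairwise non-isomorphic indecomposables $P^{\bullet}_{ij}\in C_m(\proj A)$ with $\hr(P^{\bullet}_{ij})=r_i$; these being concentrated in $[0,m]$, one has $\hw(P^{\bullet}_{ij})\le m+1$ and so $\hl(P^{\bullet}_{ij})\ge r_i/(m+1)$. Put $Y_{ij}=F(P^{\bullet}_{ij})$. Since the $P^{\bullet}_{ij}$ are pairwise non-isomorphic in $D^b(\mod A)$ (Lemma~\ref{lemma-iso}) and $F$ is an equivalence, the $Y_{ij}$ are indecomposable and pairwise non-isomorphic in $D^b(\mod B)$, and the estimates above give $\hw(Y_{ij})\le m+1+c=:v$ together with $c_1r_i\le\hr(Y_{ij})\le c_2r_i$ for constants $c_1,c_2>0$ depending only on $m$ and $F$ (the lower estimate coming from $F^{-1}$, the upper from $F$, both exploiting the uniform bound $v$ on cohomological width). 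Because $\operatorname{gl.dim} B=g<\infty$, each $Y_{ij}$ is perfect and hence isomorphic in $D^b(\mod B)$ to a minimal complex $Q^{\bullet}_{ij}\in C^b(\proj B)$; since the cohomology of $Y_{ij}$ occupies an interval of length $\le v$, minimality — a Nakayama argument controls the top degree, and the bound $g$ on projective dimensions controls the bottom degree — forces $Q^{\bullet}_{ij}$ to be concentrated in an interval of length at most $v+g$. After a shift I may take $Q^{\bullet}_{ij}\in C_{m'}(\proj B)$ with $m':=v+g-1$; by Lemma~\ref{lemma-iso} it is indecomposable in $C_{m'}(\proj B)$, and $\hr(Q^{\bullet}_{ij})=\hr(Y_{ij})\in[c_1r_i,c_2r_i]$.

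It remains to remove the shift ambiguity and assemble the conclusion. If $Q^{\bullet}_{ij}\cong Q^{\bullet}_{i'j'}$ in $C_{m'}(\proj B)$, then $Y_{ij}\cong Y_{i'j'}[s]$ for some $s\in\mathbb{Z}$, whence $P^{\bullet}_{ij}\cong P^{\bullet}_{i'j'}[s]$ in $D^b(\mod A)$; but a fixed object of $D^b(\mod A)$ has at most $m+1$ shifts lying in $C_m(\proj A)$ (a window of width $\le m+1$ sits inside $[0,m]$ in at most $m+1$ positions), so each isomorphism class among the $Q^{\bullet}_{ij}$ is hit by at most $m+1$ of the pairs $(i,j)$. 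Hence, for each $i$, the family $\{Q^{\bullet}_{ij}\}_j$ realises infinitely many isomorphism classes of indecomposables of $C_{m'}(\proj B)$, all of cohomological range in the finite set $[c_1r_i,c_2r_i]\cap\mathbb{N}$; by the pigeonhole principle some value $\rho_i$ there is the cohomological range of infinitely many of them, and as $\rho_i\ge c_1r_i\to\infty$ I may pass to a subsequence along which $\{\rho_i\}$ is strictly increasing. Lemma~\ref{lemma-dim-hr} then yields that $C_{m'}(\proj B)$ is of strongly unbounded type. The main obstacle here is securing the uniform width bound $m'$: this rests both on the derived equivalence inflating the cohomological width by at most an additive constant — which works exactly because we start from the uniform bound $\hw\le m+1$ afforded by $C_m(\proj A)$ — and, in an essential use of $\operatorname{gl.dim} A<\infty$, on the fact that over a finite-global-dimension algebra a minimal projective complex whose cohomology lies in a short window is itself short; the shift bookkeeping in the last step is a minor but genuine nuisance.
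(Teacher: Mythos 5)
Your proposal is correct and follows the same overall strategy as the paper: transport the family $\{P^{\bullet}_{ij}\}$ through the derived equivalence, bound the width of the images uniformly, control the distortion of $\hr$ in both directions, and finish with Lemma \ref{lemma-dim-hr}. The one step you implement differently is the uniform width bound: the paper tensors with a minimal projective bimodule resolution $R^{\bullet}$ of $B$ (of length $n=\mbox{\rm gl.dim} B$) to exhibit $F(P^{\bullet}_{ij})$ explicitly as a projective complex of width at most $m+l+n$, whereas you argue abstractly that a minimal projective complex over $B$ whose cohomology occupies a window of length $v$ must itself have length at most about $v+\mbox{\rm gl.dim} B$ --- both are valid and use the hypothesis $\mbox{\rm gl.dim} A<\infty$ in the same essential way. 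A point in your favour is that you explicitly dispose of the shift ambiguity (each isomorphism class among the $Q^{\bullet}_{ij}$ is hit by only finitely many pairs $(i,j)$, since $P^{\bullet}_{ij}\cong P^{\bullet}_{i'j'}[s]$ with both minimal in $C_m(\proj A)$ forces $|s|\le m$) and carry out the pigeonhole extraction of a strictly increasing sequence of cohomological ranges, steps the paper compresses into ``without loss of generality'' and ``a similar discussion as in Lemma \ref{lemma-dim-hr}''; your small miscounts (at most $2m+1$ rather than $m+1$ admissible shifts, the precise value of $m'$) are immaterial.
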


\begin{proof}
Since $C_m(\proj A)$ is strongly unbounded, by Lemma \ref{lemma-dim-hr},
there is an increasing sequence $\{r_i \; | \; i \in \mathbb{N}\}
\subseteq \mathbb{N}$ and pairwise non-isomorphic objects
$\{P^{\bullet}_{ij}\; | \; i, j\in \mathbb{N} \}$ in
$C_m(\proj A)$ such that $\hr(P^{\bullet}_{ij})=r_i$.
Moreover, since $A$ and $B$ are derived equivalent,
there is a two-sided tilting complex in $D^b(A^{op}\otimes B)$
$$_AT^{\bullet}_B= 0\rightarrow T^{-l} \rightarrow
T^{-l+1} \rightarrow \cdots \rightarrow
T^{-1}\rightarrow P^0 \rightarrow 0, $$ such that $F=-
\otimes^L_A T^{\bullet}_B : D^b(A) \rightarrow D^b(B)$ is a derived
equivalence \cite{Ri91}. Note that $\mbox{\rm gl.dim} A<\infty$ implies
$\mbox{\rm gl.dim} B<\infty$ \cite{Hap88}. We assume
$\mbox{\rm gl.dim} B=n$, and we can take a minimal projective $B$-$B$-bimodule
resolution of $B$
$$R^{\bullet} = 0 \longrightarrow R^{-n} \stackrel{d^{-n}}{\longrightarrow}
R^{-n+1} \stackrel{d^{-n+1}}{\longrightarrow} \cdots \longrightarrow
R^{-1} \stackrel{d^{-1}}{\longrightarrow} R^0 \longrightarrow 0.$$
Then for any $i, j\in \mathbb{N}$, $F(P_{ij}^{\bullet})
=P_{ij}^{\bullet}\otimes^L_A T^{\bullet}_B\cong
P_{ij}^{\bullet}\otimes^L_A T^{\bullet}\otimes^L_B R^{\bullet}_B$,
which is a projective $B$-module complex of width less than
$m+l+n$. Thus, without loss of generality, we can assume
$F(P_{ij}^{\bullet})\in C_{m+l+n}(\proj B)$ with suitable shifts and isomorphisms
for any $i, j\in \mathbb{N}$. By \cite[Prop.1(3)]{HZ13}, we have two
integers $N, N'$, such that
$$\frac{1}{N'} \cdot \hr(P_{ij}^{\bullet}) \leq
\hr(F(P_{ij}^{\bullet})) \leq N \cdot \hr(P_{ij}^{\bullet}).$$
With a similar discussion as in the proof of Lemma \ref{lemma-dim-hr},
we shall find inductively an increasing sequence
$\{r'_s \; | \; s \in \mathbb{N}\}$ and
infinitely many indecomposable pairwise non-isomorphic
objects $\{Q_{st}^{\bullet}\in C_{m+l+n}(\proj B)
\; | \; s, t \in \mathbb{N}\}\subseteq \{F(P_{ij}^{\bullet})\; |\; i, j\in
\mathbb{N}\}$ such that $\hr(Q_{st}^{\bullet}) = r'_s$.
Thus the lemma follows by Lemma \ref{lemma-dim-hr}.
\end{proof}

\begin{corollary}
\label{coro-simply-con}
Let $A$ be a simply connected algebra. If $A$ is strongly derive unbounded,
then there exists an integer $m$ such that $C_m(\proj A)$ is of
strongly unbounded type.
\end{corollary}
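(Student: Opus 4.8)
The plan is to reduce the simply connected case to the representation-infinite case handled in Lemma \ref{lemma-rep-inf}, via a derived equivalence. Since $A$ is simply connected, it is triangular, hence $\mathrm{gl.dim}\,A < \infty$, so Proposition \ref{prop-der-equiv} is available as a transport tool: it suffices to produce \emph{some} algebra $B$ that is derived equivalent to $A$ and has $C_{m_0}(\proj B)$ of strongly unbounded type for some $m_0$. The natural candidate for $B$ is a representation-infinite algebra, because then Lemma \ref{lemma-rep-inf} immediately gives that $C_1(\proj B)$ is strongly unbounded, and Proposition \ref{prop-der-equiv} pushes this back to $C_{m'}(\proj A)$ for a suitable $m'$.

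So the key step is: \emph{if $A$ is simply connected and strongly derived unbounded, then $A$ is derived equivalent to a representation-infinite algebra.} First I would invoke the dichotomy of \cite{HZ13}: $A$ is either derived discrete or strongly derived unbounded, and by hypothesis we are in the second case. For simply connected algebras one knows (and I would cite the classification of derived discrete algebras and the structure theory for simply connected/tilted algebras) that a derived-discrete simply connected algebra is derived equivalent to a hereditary algebra of Dynkin type, i.e., representation-finite. Contrapositively, a simply connected algebra that is \emph{not} derived discrete must fail to be iterated-tilted of Dynkin type; one then argues that its derived category contains (up to derived equivalence) a representation-infinite algebra — concretely, that $D^b(A)$ has a ``slice'' or a tilting complex whose endomorphism algebra is representation-infinite. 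This is where I expect the main obstacle: extracting an honest representation-infinite algebra $B$ derived equivalent to $A$ requires either the classification machinery for simply connected algebras of finite/tame/wild type, or a direct argument that strong derived unboundedness of a triangular algebra forces a representation-infinite derived-equivalent member in its derived class.

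Assuming that step, the proof concludes quickly. Take $B$ representation-infinite with $B$ derived equivalent to $A$. By Lemma \ref{lemma-rep-inf}, $C_1(\proj B)$ is of strongly unbounded type. Apply Proposition \ref{prop-der-equiv} (with the roles of $A$ and $B$ interchanged, using $\mathrm{gl.dim}\,B < \infty$, which holds since $B$ is also of finite global dimension by \cite{Hap88}, $A$ being triangular): there is an integer $m$ such that $C_m(\proj A)$ is of strongly unbounded type. Note Proposition \ref{prop-der-equiv} is stated for $C_{m'}(\proj B)$ strongly unbounded implying $C_{m''}(\proj A)$ strongly unbounded whenever $\mathrm{gl.dim}\,B<\infty$ and $A,B$ are derived equivalent, so it applies verbatim. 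This gives the desired integer $m$ and completes the proof.
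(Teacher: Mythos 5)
Your proposal follows essentially the same route as the paper: reduce to a representation-infinite algebra in the derived equivalence class of $A$, then apply Lemma \ref{lemma-rep-inf} to that algebra and transport back via Proposition \ref{prop-der-equiv} (using that $A$ is triangular, hence of finite global dimension). The step you flag as the main obstacle is exactly what the paper resolves with a single citation, \cite[Lemma 2]{HZ13}, which states that any simply connected algebra is tilting (hence derived) equivalent to either a hereditary algebra of Dynkin type or a representation-infinite algebra; since the former would make $A$ derived discrete rather than strongly derived unbounded, $A$ must be tilting equivalent to a representation-infinite algebra, and the rest of your argument goes through verbatim.
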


\begin{proof}
By \cite[Lemma 2]{HZ13}, any simply connected algebra is tilting equivalent to
a hereditary algebra of Dynkin type or a representation-infinite algebra.
If $A$ is strongly derived unbounded, then $A$ is tilting equivalent to
a representation-infinite algebra. Since simply connected algebras are triangular
algebras and then of finite global dimension,
by the previous proposition and Lemma \ref{lemma-rep-inf},
there exists an integer $m$ such that $C_m(\proj A)$ is of
strongly unbounded type.
\end{proof}

\subsection{Cleaving functors and the strongly unboundedness of $C_m(\proj A)$}

In the context of cleaving functors, bound quiver algebras are
viewed as bounded categories, see \cite{GR97} for details.
In the rest of this paper, we will replace bound quiver algebras by bounded categories.

A $k$-linear category $A$ is a category together with $k$-vector space structure
on the set $A(x, y)$ of all morphisms from $x\in A$ to $y\in A$ such that the
composition of morphisms is bilinear.
We say a $k$-linear category $A$ is a {\it locally bounded category} if

(1) different objects in $A$ are non-isomorphic;

(2) for any $a\in A$, the endomorphism algebra $A(a, a)$ is local;

(3) $\dim_k \sum_{x \in A} A(a,x) < \infty$ and $\dim_k \sum_{x \in
A} A(x, a) < \infty$ for all $a \in A$.

\noindent A locally bounded category is a
{\it bounded category} if it has only finitely many objects.
Note that a bound quiver algebra $A=kQ/I$ with
$I$ admissible can be viewed as a bounded category
by seeing the vertexes $i\in Q_0$ as objects and the
combinations of paths in $kQ/I$ as morphisms.
Conversely, a bounded category $A$ admits a presentation
$A\cong kQ_A/I$ with $Q_A$  finite and
$I$ admissible.

Let $A$ be a locally bounded category. A {\it right $A$-module} $M$ is just a
covariant $k$-linear functor from $A$ to the category of $k$-vector
spaces.  Denote by $\Mod A$ the category of all right
$A$-modules $M$ with $\dim M(a)<\infty$ for any $a\in A$.
For any $M\in \Mod A$, the {\it dimension vector} of $M$ is
$\mathbf{dim} M:=(\dim M(a))_{a\in A}$, and the {\it
support} of $M$ is $\Supp M:=\{a\in A \; |\; M(a)\neq 0\}$.
Denote by $\mod A$ the full subcategory of $\Mod A$
consisting of all $A$-modules $M$ such that
$\Supp M$ is finite. The {\it dimension} of $M\in \mod A$ is $\dim M := \sum_{a \in
A}\dim_k M(a)$. The indecomposable
projective $A$-modules are $P_a=A(a, -)$ and indecomposable
injective $A$-modules are $I_a=DA(-, a)$ for all $a \in A$, where
$D=\Hom_k(-, k)$. Moreover, all the concepts and notations defined
for a bound quiver algebra make sense for a bounded
category.

To a $k$-linear functor $F: B \rightarrow A$ between bounded
categories, we associates a {\it restriction functor} $F_{\ast}:
\mod A \rightarrow \mod B$, which is given by $F_{\ast}(M) = M \circ
F$ and exact. The restriction functor $F_{\ast}$ admits a left
adjoint functor $F^{\ast}$, called the {\it extension functor},
which sends a projective $B$-module $B(b, -)$ to a projective
$A$-module $A(Fb, -)$. Moreover, $F_{\ast}$ extends naturally to a
derived functor $F_{\ast}: D^b(A)\rightarrow D^b(B)$, which has a
left adjoint $\mathbf{L}F^{\ast}: D^b(B)\rightarrow D^b(A)$. Note
that $\mathbf{L}F^{\ast}$ is the left derived functor associated
with $F^{\ast}$ and maps $K^b(\proj B)$ into $K^b(\proj A)$. We
refer to \cite{W94} for the definition of derived  functors.

A $k$-linear functor $F: B \rightarrow A$ between bounded categories
is called a {\it cleaving functor} \cite{BGRS85,Vo01} if it
satisfies the following equivalent conditions:

(1) The linear map $B(b,b') \rightarrow A(Fb,Fb')$ associated with
$F$ admits a natural retraction for all $b,b' \in B$;

(2) The adjunction morphism $\phi_M: M \rightarrow (F_{\ast} \circ
F^{\ast})(M)$ admits a natural retraction for all $M \in \mod B$;

(3) The adjunction morphism $\Phi_{X^{\bullet}}: X^{\bullet}
\rightarrow (F_{\ast} \circ \mathbf{L}F^{\ast})(X^{\bullet})$ admits
a natural retraction for all $X^{\bullet} \in D^b(B)$.

\begin{proposition}\label{prop-cleaving}
Let $B$ be a bounded category of finite global dimension and
$C_m(\proj B)$ be of strongly unbounded type for some $m$. If
there is a cleaving functor $F: B\rightarrow A$,
then $C_{m}(\proj A)$ is of strongly unbounded type.
\end{proposition}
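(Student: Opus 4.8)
The plan is to transport the infinite families of indecomposables from $C_m(\proj B)$ to $C_m(\proj A)$ via the derived extension functor $\mathbf{L}F^{\ast}\colon D^b(B)\to D^b(A)$, exploiting that $F$ is cleaving so that $\mathbf{L}F^{\ast}$ is ``faithful enough''. Since $C_m(\proj B)$ is of strongly unbounded type, by Lemma~\ref{lemma-dim-hr} we may fix an increasing sequence $\{r_i\}\subseteq\mathbb N$ and pairwise non-isomorphic indecomposables $\{P^{\bullet}_{ij}\mid i,j\in\mathbb N\}$ in $C_m(\proj B)$ with $\hr(P^{\bullet}_{ij})=r_i$. Applying $\mathbf{L}F^{\ast}$ and taking minimal projective representatives, we get complexes $Q^{\bullet}_{ij}:=\mathbf{L}F^{\ast}(P^{\bullet}_{ij})$ in $K^b(\proj A)$; since $\mathbf{L}F^{\ast}$ maps $K^b(\proj B)$ into $K^b(\proj A)$ and each $P^{\bullet}_{ij}$ is concentrated in degrees $[0,m]$, the width of $Q^{\bullet}_{ij}$ is bounded by $m+1+\mathrm{gl.dim}\,A$, so after shifting we may assume all $Q^{\bullet}_{ij}$ lie in a single $C_{m'}(\proj A)$ with $m'=m+\mathrm{gl.dim}\,A$; invoking the Remark that follows Lemma~\ref{lemma-iso}, it suffices to work with this larger $m'$, and then the full embedding $C_m(\proj A)\hookrightarrow C_{m'}(\proj A)$ lets us conclude for $C_m(\proj A)$ too — or, more carefully, one argues directly with $m'$ and notes strongly unboundedness for one index forces it for all larger ones.

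The two properties I need from the cleaving hypothesis are: (a) $\mathbf{L}F^{\ast}$ sends indecomposables to indecomposables, and (b) it reflects isomorphisms, i.e. $\mathbf{L}F^{\ast}(P^{\bullet})\cong\mathbf{L}F^{\ast}(P'^{\bullet})$ in $D^b(A)$ implies $P^{\bullet}\cong P'^{\bullet}$ in $D^b(B)$. Both follow from condition~(3) in the definition of cleaving functor: the adjunction morphism $\Phi_{X^{\bullet}}\colon X^{\bullet}\to F_{\ast}\mathbf{L}F^{\ast}(X^{\bullet})$ is a split monomorphism in $D^b(B)$, naturally in $X^{\bullet}$. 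For (b), if $\mathbf{L}F^{\ast}(P^{\bullet})\cong\mathbf{L}F^{\ast}(P'^{\bullet})$ then applying $F_{\ast}$ gives $F_{\ast}\mathbf{L}F^{\ast}(P^{\bullet})\cong F_{\ast}\mathbf{L}F^{\ast}(P'^{\bullet})$, and $P^{\bullet}$ is a direct summand of the left side, $P'^{\bullet}$ of the right side; since $D^b(B)\simeq K^{-,b}(\proj B)$ is Krull–Schmidt one then compares indecomposable summands and uses naturality of $\Phi$ to match $P^{\bullet}$ with $P'^{\bullet}$ (this is the standard cleaving-functor argument, as in \cite{BGRS85,Vo01}). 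For (a), indecomposability is passed along because a splitting of $\Phi_{P^{\bullet}}$ would, from a non-trivial decomposition of $\mathbf{L}F^{\ast}(P^{\bullet})$, produce a non-trivial idempotent of $\End_{D^b(B)}(P^{\bullet})$ after applying $F_{\ast}$ and composing with the retraction — contradicting that $\End_{D^b(B)}(P^{\bullet})$ is local (equivalently, using Lemma~\ref{lemma-iso}(1), that $P^{\bullet}$ is indecomposable in $C_m(\proj B)$). Combined with Lemma~\ref{lemma-iso}, indecomposability and non-isomorphy in $C_m$ are the same as in $D^b$, so the $Q^{\bullet}_{ij}$ are pairwise non-isomorphic indecomposables in $C_{m'}(\proj A)$.

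It remains to control the cohomological range. By the functoriality of $\mathbf{L}F^{\ast}$ on bounded complexes — more precisely, since $F_{\ast}$ and $\mathbf{L}F^{\ast}$ are defined by tensoring with fixed bimodule complexes over the bounded categories $B$ and $A$ — there are constants $N,N'\in\mathbb N$, depending only on $F$, such that
$$\tfrac{1}{N'}\cdot\hr(P^{\bullet})\le\hr(\mathbf{L}F^{\ast}(P^{\bullet}))\le N\cdot\hr(P^{\bullet})$$
for every $P^{\bullet}\in D^b(B)$; the split injection $\Phi_{P^{\bullet}}$ gives the lower bound since $\hr$ is monotone under direct summands and shift-invariant, and the upper bound is the elementary estimate on how $\hl$ and $\hw$ grow under applying a fixed finite-dimensional bimodule complex (cf.\ the argument for \cite[Prop.~1(3)]{HZ13}). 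Granting this, the families $\{Q^{\bullet}_{ij}\}$ have cohomological ranges sandwiched between $r_i/N'$ and $N r_i$, so — repeating verbatim the inductive selection procedure from the proof of Lemma~\ref{lemma-dim-hr} — we extract an increasing sequence $\{r'_s\}\subseteq\mathbb N$ and, for each $s$, infinitely many pairwise non-isomorphic indecomposables in $C_{m'}(\proj A)$ of cohomological range $r'_s$. By Lemma~\ref{lemma-dim-hr}, $C_{m'}(\proj A)$ is of strongly unbounded type, and hence so is $C_m(\proj A)$ for every $m$ large enough (equivalently for the original $m$, by the Remark after Lemma~\ref{lemma-iso}). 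The main obstacle is step (b): carefully running the Krull–Schmidt bookkeeping so that a matching of indecomposable summands of $F_{\ast}\mathbf{L}F^{\ast}(P^{\bullet})$ and $F_{\ast}\mathbf{L}F^{\ast}(P'^{\bullet})$ actually forces $P^{\bullet}\cong P'^{\bullet}$ rather than merely $P^{\bullet}$ and $P'^{\bullet}$ sharing a summand — this is where naturality of the retraction of $\Phi$ in the cleaving definition does the real work, and it must be invoked with some care.
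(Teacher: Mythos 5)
Your overall strategy (transport the families via $\mathbf{L}F^{\ast}$, use the cleaving retraction to keep track of them, control $\hr$ by constants, then rerun the selection argument of Lemma~\ref{lemma-dim-hr}) matches the paper's, but two of your key claims are gaps. First, claim (a) --- that $\mathbf{L}F^{\ast}$ sends indecomposables to indecomposables --- is not true in general, and your own argument does not prove it: writing $\mathbf{L}F^{\ast}(P^{\bullet})=X_1\oplus X_2$ and composing the section $\Phi_{P^{\bullet}}$ with the retraction through each summand expresses $1_{P^{\bullet}}=e_1+e_2$ in the local ring $\End_{D^b(B)}(P^{\bullet})$; this only shows some $e_i$ is invertible, i.e.\ that $P^{\bullet}$ is a direct summand of $F_{\ast}(X_i)$ for \emph{one} summand $X_i$ --- it does not force $X_2=0$. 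The paper turns exactly this observation into the proof: it does not claim $\mathbf{L}F^{\ast}(P_{ij}^{\bullet})$ is indecomposable, but \emph{selects} an indecomposable direct summand $Q_{ij}^{\bullet}$ of $\mathbf{L}F^{\ast}(P_{ij}^{\bullet})$ such that $P_{ij}^{\bullet}$ is a direct summand of $F_{\ast}(Q_{ij}^{\bullet})$. Second, your claim (b) (that $\mathbf{L}F^{\ast}$ reflects isomorphisms), which you yourself flag as ``the main obstacle'' and leave unresolved, is neither needed nor the right tool. What is needed is only that for fixed $i$ infinitely many isomorphism classes survive among the $Q_{ij}^{\bullet}$, and this follows from a finiteness count: if $Q_{ij}^{\bullet}\cong Q_{ij'}^{\bullet}$ then $P_{ij}^{\bullet}$ and $P_{ij'}^{\bullet}$ are both summands of the single object $F_{\ast}(Q_{ij}^{\bullet})$, which has only finitely many indecomposable summands, so each isomorphism class of $Q$'s accounts for only finitely many $j$'s.

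There is also a bookkeeping error in your width estimate. Since each $P_{ij}^{\bullet}$ already consists of projectives, $\mathbf{L}F^{\ast}(P_{ij}^{\bullet})=F^{\ast}(P_{ij}^{\bullet})$ is computed degreewise, sending $B(b,-)$ to $A(Fb,-)$; the result is again concentrated in degrees $[0,m]$ (after passing to a minimal representative), so no enlargement to $m'=m+\mathrm{gl.dim}\,A$ is necessary --- the proposition asserts strong unboundedness of $C_m(\proj A)$ for the \emph{same} $m$. Your proposed descent from $m'$ back to $m$ via the full embedding $C_m(\proj A)\hookrightarrow C_{m'}(\proj A)$ runs in the wrong direction: that embedding shows strong unboundedness propagates from smaller to larger indices, not the converse. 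The $\hr$-comparison constants via \cite[Prop.~5(1)]{HZ13} and the final selection argument are in line with the paper.
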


\begin{proof}
Suppose there is an increasing sequence $\{r_i \; | \; i \in \mathbb{N}\}
\subseteq \mathbb{N}$ and pairwise non-isomorphic objects
$\{P^{\bullet}_{ij}\; | \; i, j\in \mathbb{N} \}$ in
$C_m(\proj B)$ such that $\hr(P^{\bullet}_{ij})=r_i$.
Since $F$ is a cleaving functor, for any $i,j\in \mathbb{N}$,
$\mathbf{L}F^{\ast}(P_{ij}^{\bullet})=F^{\ast}(P_{ij}^{\bullet})$,
which is projective $A$-module complex of width less 
than $m$ by the definition of $F^{\ast}$. Then, with suitable 
isomorphisms, we can assume 
$\mathbf{L}F^{\ast}(P_{ij}^{\bullet})$ lies in $C_m(\proj A)$.
Moreover, for any $i, j\in\mathbb{N}$, $P_{ij}^{\bullet}$ is
a direct summand of $(F_{\ast} \circ \mathbf{L}F^{\ast})
(P_{ij}^{\bullet})$. Thus for any $P_{ij}^{\bullet}$, we can
choose an indecomposable direct summand $Q_{ij}^{\bullet}$ of
$\mathbf{L}F^{\ast}(P_{ij}^{\bullet})$, such that $P_{ij}^{
\bullet}$ is a direct summand of $F_{\ast}(Q_{ij}^{\bullet})$.
Note that for any $i \in \mathbb{N}$, the set $\{Q_{ij}^{\bullet} \;
| \; j \in \mathbb{N}\}$ contains infinitely many elements which are
pairwise non-isomorphic since the set $\{P_{ij}^{\bullet} \;
| \; j \in \mathbb{N}\}$ contains infinitely many pairwise non-isomorphic
elements. Moreover, by the proof of \cite[Prop.5(1)]{HZ13},
there exist two integers $N, N'$, such that for any $i,j \in \mathbb{N}$,
we have the inequality $\frac{1}{N'} \cdot \hr(P_{ij}^{\bullet})
\leq \hr(Q_{ij}^{\bullet}) \leq N \cdot \hr(P_{ij}^{\bullet})$.
Thus $C_m(\proj A)$ is of strongly unbounded with a similar discussion
as in the Lemma \ref{lemma-dim-hr}.
\end{proof}

\subsection{The proof of the main theorem}

Let $A$ be a bounded category. Recall that the repetitive category $\hat{A}$
of $A$ has the pairs $(a, i)$ as objects, where $a\in A$ and $i\in \mathbb{Z}$,
while the morphisms from $(a, i)$ to $(b, i)$ and $(b, i+1)$ are determined by
$A(a, b)$ and $A(b, a)$ respectively, and zero else \cite{HW83}. Note that $\hat{A}$ is self-injective
locally bounded category. Moreover, there is a full embedding
triangulated functor $F: D^b(A)\rightarrow \underline{\mod} \hat{A}$ \cite{Hap88}.

Recall from \cite{Vo01}, $A$ is said to be {\it derived discrete} if for any
$d\in \mathbb{N}$, there are only finitely many indecomposables in $D^b(A)$
with cohomological range $d$. Moreover, $K^b(\proj A)$ is {\it discrete} if
for any
$d\in \mathbb{N}$, there are only finitely many indecomposables in $K^b(\proj A)$
of cohomological range $d$.

\begin{definition}
A locally bounded category $B$ is said to be {\it of discrete representation type } if
for any $\mathbf{d}\in \mathbb{N}^{|B|}$, there are only finitely many indecomposable
objects $M \in \mod A$ with $\mathbf{dim} M=\mathbf{d}$. Moreover, we say
$B$ is {\it of strongly unbounded representation type} if there are infinitely many
$\mathbf{d}\in \mathbb{N}^{|B|}$ such that for each $\mathbf{d}$, there are
infinitely many indecomposables in $\mod A$ with dimension vector $\mathbf{d}$.
\end{definition}

The following lemma is the classification of derived discrete algebras
due to Vossieck \cite[Theorem]{Vo01}.

\begin{lemma}\label{lemma-voss}
Let $A$ be a bounded category. Then the following statements are equivalent

{\rm(1)} $\hat{A}$ is of discrete representation type;

{\rm(2)}  $A$ is derived discrete;

{\rm(3)}  $K^b(\proj A)$ is discrete;

{\rm(4)}  $A$ is piecewise hereditary of Dynkin type or admits a presentation $kQ/I$
with $Q$ one-cycle gentle quiver such that the numbers of clockwise and of counterclockwise
paths of length two which belongs to $I$ are different.
\end{lemma}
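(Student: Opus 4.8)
The statement to prove is Lemma~\ref{lemma-voss}, but since the equivalence $(2)\Leftrightarrow(4)$ is exactly Vossieck's theorem \cite{Vo01} and $(2)\Leftrightarrow(3)$ is the part of that circle concerning the homotopy category, the real work here is to splice the representation-discreteness of $\hat A$, i.e. condition~$(1)$, into the chain. The plan is therefore to prove $(1)\Leftrightarrow(2)$ and let the remaining equivalences be quoted. I would set up the argument around the Happel embedding $F\colon D^b(A)\to\underline{\mod}\,\hat A$ recalled just before the lemma, which is a full faithful triangulated functor whose image is the subcategory of objects without projective-injective summands, and which becomes an equivalence when $\mbox{\rm gl.dim}A<\infty$.

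\smallskip

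\noindent\emph{From $(2)$ to $(1)$.} Assume $A$ is derived discrete. I would use the numerical comparison between the cohomological range in $D^b(A)$ and the dimension vector in $\mod\hat A$. Concretely: every indecomposable non-projective $\hat A$-module is, up to the Nakayama shift $\nu$ of $\hat A$, of the form $F(X^{\bullet})$ for an indecomposable $X^{\bullet}\in D^b(A)$, and one has two-sided bounds relating $\mathbf{dim}\,F(X^\bullet)$ to $\hl(X^\bullet)$ and $\hw(X^\bullet)$ — the upper bound coming from the explicit construction of $F$ (take a projective resolution, glue the stalk complexes into one $\hat A$-module, so the dimension of each graded piece is controlled by $\hl$ and $\dim A$ in the style of Lemma~\ref{lemma-dim-control}), the lower bound because $F$ is faithful and cohomology can be recovered. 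Together with the fact that $\nu$ permutes the finitely many vertices of $\hat A$ lying in any bounded ``window'' of $\mathbb Z$-degrees, a fixed dimension vector $\mathbf d$ for $\mod\hat A$ forces $X^\bullet$ to lie in finitely many shift orbits of bounded cohomological range, of which there are only finitely many by hypothesis; adding back the finitely many projective-injective indecomposables completes this direction.

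\smallskip

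\noindent\emph{From $(1)$ to $(2)$.} Conversely, assume $\hat A$ is of discrete representation type. Given $d\in\mathbb N$, any indecomposable $X^\bullet\in D^b(A)$ with $\hr(X^\bullet)=d$ has bounded cohomological width, hence (as in Lemma~\ref{lemma-dim-control}, applied to a minimal projective presentation realizing $X^\bullet$ inside some $C_m(\proj A)$ after a shift — legitimate by Lemma~\ref{lemma-iso}) its image $F(X^\bullet)$ has dimension vector bounded by a function of $d$ alone, once we fix the degree window by normalizing the shift. So the $F(X^\bullet)$ fall into finitely many dimension vectors, hence finitely many isomorphism classes by $(1)$, and $F$ being full and faithful transports this back to finitely many indecomposables of cohomological range $d$ in $D^b(A)$.

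\smallskip

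\noindent The main obstacle is the bookkeeping of shifts: $\hr$, $\hl$, $\hw$ are shift-invariant in $D^b(A)$, but $\mathbf{dim}$ in $\mod\hat A$ is not invariant under the Nakayama automorphism $\nu$ of $\hat A$ (which corresponds to shift under $F$), so one must argue that $\nu$ has finite order on the relevant support — equivalently that only finitely many $\nu$-orbits of dimension vectors arise for complexes with cohomology concentrated in a window of bounded width. This is where the locally-bounded-but-not-bounded nature of $\hat A$ has to be handled carefully; I expect the cleanest route is to fix, for each $X^\bullet$, the unique shift placing its cohomology in degrees $[0,\hw(X^\bullet)-1]$, observe that $F$ of such a complex is supported on the finitely many vertices $(a,i)$ with $0\le i\le \hw-1$, and then everything reduces to the genuinely finite combinatorics already controlled by Lemma~\ref{lemma-dim-control}.
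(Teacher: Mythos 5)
The paper offers no proof of this lemma at all: it is quoted verbatim as Vossieck's theorem from \cite{Vo01}, so any argument you give is doing work the paper delegates to a citation. Your direction $(1)\Rightarrow(2)$ is sound and is essentially the standard argument (and the same dimension bookkeeping the paper later uses in step $(2)\Rightarrow(4)$ of its main theorem): normalize the shift, bound $\mathbf{dim}\,F(X^{\bullet})$ by $\hr(X^{\bullet})\cdot\sum_{a,l}\mathbf{dim}\,F(S_a[l])$, and use fullness and faithfulness of $F$ to transport discreteness back.

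The direction $(2)\Rightarrow(1)$, however, has a genuine gap. You assert that every indecomposable non-projective $\hat A$-module is, up to the Nakayama shift $\nu$, of the form $F(X^{\bullet})$ for some $X^{\bullet}\in D^b(A)$. That is only true when $\mbox{\rm gl.dim}\,A<\infty$: Happel's functor is an equivalence onto $\underline{\mod}\hat A$ exactly in that case, and for infinite global dimension its image is a proper triangulated subcategory which is not even $\nu$-stable (an indecomposable $\hat A$-module supported across several $\mathbb Z$-levels is an iterated extension of simples lying in \emph{different} $\nu$-shifts of the image, and need not lie in any single one). This is not a corner case you can wave away: the derived discrete algebras appearing in condition $(4)$ that are not piecewise hereditary of Dynkin type --- the gentle one-cycle algebras with relations, e.g.\ $k[x]/(x^2)$ --- all have infinite global dimension, so your argument fails precisely on the class of algebras for which the implication is interesting. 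Vossieck's own route for this direction does not go through the embedding at all: he proves $(2)\Rightarrow(4)$ first, and then deduces $(1)$ from $(4)$ by observing that the repetitive category of such a gentle one-cycle algebra is special biserial and the hypothesis on clockwise versus counterclockwise relations excludes bands, so that $\hat A$ has only string modules and is discrete. To repair your proof you would either have to restrict $(2)\Rightarrow(1)$ to the finite-global-dimension case and handle the rest separately, or follow the classification route through $(4)$.
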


\begin{definition}
Let $A$ be a bounded category. $K^b(\proj A)$ is said to be
{\it of strongly unbounded type} if
there is an increasing sequence $\{r_i \; | \; i \in \mathbb{N}\}
\subseteq \mathbb{N}$ such that for each $r_i$, up to shifts and
isomorphisms, there are infinitely many indecomposable objects in
$K^b(\proj A)$ of cohomological range $r_i$.
\end{definition}

Now we can prove the Theorem.

\begin{theorem}
Let $A$ be a bounded category. Then the following
statements are equivalent

{\rm(1)} $A$ is strongly derived unbounded;

{\rm(2)} There exists an integer
$m\geq 1$, such that the category $C_m(\proj A)$ is of strongly unbounded type.

{\rm(3)}  $K^b(\proj A)$ is of strongly unbounded type;

{\rm(4)}  $\hat{A}$ is of strongly unbounded representation type.
\end{theorem}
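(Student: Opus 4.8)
The plan is to prove the cycle of implications $(2)\Rightarrow(3)\Rightarrow(1)\Rightarrow(4)$ together with $(4)\Rightarrow(2)$, using the preparatory lemmas and the reduction to simply connected algebras via covering theory. The implication $(2)\Rightarrow(3)$ is essentially immediate: any object of $C_m(\proj A)$ is in particular an object of $K^b(\proj A)$, Lemma~\ref{lemma-iso} guarantees that indecomposability and isomorphism are detected in $K^b(\proj A)$ exactly as in $C_m(\proj A)$, and by Lemma~\ref{lemma-dim-hr} strong unboundedness of $C_m(\proj A)$ can be phrased via the cohomological range; so the same sequence $\{r_i\}$ and the same objects witness that $K^b(\proj A)$ is of strongly unbounded type. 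The implication $(3)\Rightarrow(1)$ uses that every object of $K^b(\proj A)$ is, up to isomorphism in $D^b(A)$, a minimal bounded projective complex, hence lies in some $C_m(\proj A)$ after a shift; combined with Lemma~\ref{lemma-dim-control} the width of such a complex is controlled, so the cohomological range in $D^b(A)$ matches (up to the constants $N$, $m+1$ from Lemma~\ref{lemma-dim-hr}) the relevant dimension data, and one extracts an increasing sequence $\{r_i\}$ realizing infinitely many indecomposables of range $r_i$ in $D^b(A)$, i.e. $A$ is strongly derived unbounded.

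For $(1)\Rightarrow(4)$, I would use Happel's full embedding $F\colon D^b(A)\hookrightarrow \umod\hat A$ from \cite{Hap88}. An increasing sequence $\{r_i\}$ and infinitely many pairwise non-isomorphic indecomposables $P^\bullet_{ij}\in D^b(A)$ of cohomological range $r_i$ map to pairwise non-isomorphic indecomposables $F(P^\bullet_{ij})$ in $\umod\hat A$, hence to indecomposable non-projective $\hat A$-modules. The key point is to bound the dimension vector of $F(P^\bullet_{ij})$ above and below in terms of $\hr(P^\bullet_{ij})$: realizing $F(P^\bullet_{ij})$ concretely (e.g. via the mapping-cone/Nakayama-twist description of the embedding) shows its total dimension is comparable to the sum of dimensions of a minimal projective complex representing $P^\bullet_{ij}$ in a bounded window, which by Lemma~\ref{lemma-dim-control} and Lemma~\ref{lemma-dim-hr} is sandwiched between constant multiples of $\hr(P^\bullet_{ij})=r_i$. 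A now-routine pigeonhole argument, identical in structure to the proof of Lemma~\ref{lemma-dim-hr}, then produces an increasing sequence of dimension vectors $\mathbf d_s\in\mathbb N^{|\hat A|}$ each supporting infinitely many indecomposables, i.e. $\hat A$ is of strongly unbounded representation type.

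The implication $(4)\Rightarrow(2)$ is the hard direction and the main obstacle, because passing from modules over the infinite-dimensional $\hat A$ back to bounded projective complexes over $A$ is delicate. Here the strategy is to first reduce to the simply connected case: if $A$ is not derived discrete then, by the dichotomy of \cite{HZ13}, it is strongly derived unbounded, and one wants to conclude $(2)$; conversely if $A$ \emph{were} such that no $C_m(\proj A)$ is strongly unbounded, one aims to show $\hat A$ is of discrete representation type, contradicting $(4)$. I would argue contrapositively: assuming no $C_m(\proj A)$ is of strongly unbounded type, Lemma~\ref{lemma-rep-inf} forces $A$ to be representation-finite, and then covering theory supplies a simply connected Galois cover $\tilde A\to A$ with $\tilde A$ representation-finite; by Corollary~\ref{coro-simply-con} and the list in \cite[Lemma 2]{HZ13}, $\tilde A$ (hence $A$ up to the relevant equivalences, via Proposition~\ref{prop-der-equiv} and the cleaving-functor Proposition~\ref{prop-cleaving} applied to the covering/push-down functor) is piecewise hereditary of Dynkin type or of the one-cycle gentle form in Lemma~\ref{lemma-voss}(4), so by Lemma~\ref{lemma-voss} the algebra $A$ is derived discrete and $\hat A$ is of discrete representation type. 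The technical heart is making the cleaving/covering comparison precise enough to transport "no strongly unbounded $C_m$" between $A$ and its simply connected cover in both directions while keeping track of cohomological ranges; once that is in place, Lemma~\ref{lemma-voss} closes the loop and establishes all four equivalences.
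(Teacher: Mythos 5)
Your overall architecture (the same preparatory lemmas, the reduction to simply connected algebras via coverings and cleaving functors, Happel's embedding for the repetitive algebra, and Vossieck's theorem) matches the paper's, but you have rearranged the implications into the cycle $(2)\Rightarrow(3)\Rightarrow(1)\Rightarrow(4)\Rightarrow(2)$, whereas the paper proves $(1)\Rightarrow(2)$, $(2)\Rightarrow(3)$, $(3)\Rightarrow(1)$, $(2)\Rightarrow(4)$ and $(4)\Rightarrow(1)$. This reordering creates a genuine gap in your link $(1)\Rightarrow(4)$. The dimension comparison between $\dim F(X^{\bullet})$ and $\hr(X^{\bullet})$ under Happel's embedding is only uniform when the cohomological width is uniformly bounded: the upper bound $\mathbf{dim}\, F(X^{\bullet})\leq \hr(X^{\bullet})\cdot\mathbf{d}$ has a constant $\mathbf{d}$ built from $F(S_a[l])$ for $l$ ranging over the window containing the cohomology, and the lower bound that drives the pigeonhole induction is of the form $\dim F(X^{\bullet})\geq \hl(X^{\bullet})/c$. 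Under hypothesis $(2)$ one has $\hw\leq m+1$, hence $\hl(P^{\bullet}_{ij})\geq r_i/(m+1)\to\infty$ and the induction closes; under hypothesis $(1)$ alone the witnesses could satisfy $\hl\equiv 1$ and $\hw=r_i$, so neither the constant $\mathbf{d}$ nor the growth of $\hl$ is available. Your fallback, ``the sum of dimensions of a minimal projective complex representing $P^{\bullet}_{ij}$ in a bounded window,'' does not exist in general: for $A$ of infinite global dimension the minimal projective resolution of an object of $D^b(A)$ is only right bounded, and even when it is bounded there is no window of size independent of $i$. This is precisely why the paper proves $(2)\Rightarrow(4)$ rather than $(1)\Rightarrow(4)$; as written, your cycle is broken at this edge unless you supply an extra argument (e.g.\ a lower bound for $\dim F(X^{\bullet})$ in terms of $\hw(X^{\bullet})$), which is not routine.

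Your $(4)\Rightarrow(2)$ is the right contrapositive strategy (it is the paper's $(1)\Rightarrow(2)$ composed with Lemma \ref{lemma-voss}), but two essential steps are missing. First, before any Galois covering with simply connected cover can be invoked, one must show $A$ is \emph{standard}; the paper does this by ruling out local endomorphism algebras $A(a,a)\cong k[x]/(x^l)$ with $l\geq 3$ via a cleaving functor from the truncated linear quiver algebra $A^l_n$ together with the construction of \cite[Lemma 4]{HZ13} and Proposition \ref{prop-cleaving}. Without this, the appeal to \cite{BG83, Ga81} is unjustified. Second, the cover $\tilde{A}$ is in general an infinite locally bounded category, so Corollary \ref{coro-simply-con} cannot be applied to it directly; the paper instead applies it to finite convex subcategories $B\subseteq\tilde{A}$, using that $B\hookrightarrow\tilde{A}\to A$ is cleaving, concludes each such $B$ is piecewise hereditary of type $\mathbf{A}$, deduces that $A$ is gentle, and only then obtains derived discreteness by excluding generalized bands via the Bekkert--Merklen classification (the band construction is where the contradiction with $\neg(2)$ is actually produced in the non-simply-connected case). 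Your sketch compresses all of this into ``the list in \cite[Lemma 2]{HZ13}'' and an appeal to the push-down functor, which is not the cleaving functor used in the paper; these omissions are substantive, not merely expository.
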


\begin{proof}
(1)$\Rightarrow$(2):  We assume for any integer $m>0$,
$C_m(\proj A)$ is not of strongly unbounded type.
Then $A$ is representation-finite by Lemma \ref{lemma-rep-inf}.
Thus for any $a \in A$, $A(a,a)$ is a
uniserial local algebra, and then $A(a,a) \cong k$ or $A(a,a) \cong k[x]/(x^l)$
with $l \geq 2$. Moreover, we can exclude the cases
$A(a,a) \cong k[x]/(x^l)$ for $l \geq 3$. Indeed, we consider the
the functor $F : A_n^l \rightarrow A$
given by $F(i)=a$ and $F(\alpha_j)=x$, where
$A_n^l$ is the bounded category defined by the quiver
$$\xymatrix{
n \ar [r]^{\alpha_{n-1}} & n-1\ar [r]^{\alpha_{n-2}}& \cdots \ar
[r]^{\alpha_2}& 2 \ar [r]^{\alpha_1} &1},$$ and the admissible ideal
generated by all paths of length $l$. Note that $F$ is a
cleaving functor. By the construction in \cite[Lemma 4]{HZ13},
if $l\geq 3$, then $C_5(\proj A_{3l}^l)$ is strongly unbounded,
and thus $C_5(\proj A)$ is of strongly unbounded type by
Proposition \ref{prop-cleaving}, which is a contradiction. Therefore,
for any $a \in A$, $A(a,a)\cong k$ or $A(a,a) \cong k[x]/(x^2)$.
By \cite[Section 9]{BGRS85}, $A$ is standard since $A$ contains
no Riedtmann contours.

If $A$ is simply connected, then $A$ is not strongly unbounded by
Corollary \ref{coro-simply-con}.  Suppose $A$ is not simply connected,
then there is a Galois covering $\pi: \tilde{A}\rightarrow A$ with
non-trivial free Galois group $G$ and $\tilde{A}$ simply
connected \cite{BG83, Ga81}. Now we consider the full convex
subcategory $B$ of $\tilde{A}$. Then $B$ is also simply connected.
Since the composition of the embedding $i: B\hookrightarrow \tilde{A}$
and $\pi$ is cleaving functor, $B$ is not strongly derived unbounded
by Corollary \ref{coro-simply-con}. Thus $B$ is piecewise hereditary
of Dynkin type \cite[Lemma 2]{HZ13}. Then $B$ is piecewise hereditary of
type $\mathbf{A}$ with the same argument as that in the proof of
\cite[Lemma 4.4]{Vo01} and $\tilde{A}$ admits a presentation given by a
gentle quiver $(Q,I)$ (Ref. \cite[Theorem]{AH81}), and so does $A$.
By Bekkert and Merklen's classification on the indecomposable objects in
the derived category of a gentle algebra \cite{BM03},
if $A$ contains a generalized band $w$, then we can construct
a family of pairwise non-isomorphic indecomposables
$P^{\bullet}_{w,f}$ for $f=(x-\lambda)^d\in k[x]$ in $C_m(\proj A)$
for some integer $m$,
such that $P^{\bullet}_{w,f}$ and $P^{\bullet}_{w,f'}$ have the same dimension
if and only if $\deg(f)=\deg(f')$, where $\lambda\in k\setminus\{0\}$ and $d>0$.
Then $C_m(\proj A)$ is of strongly unbounded type, which is a contradiction.
Thus $A$ contains no generalized bands and then
$A$ is derived discrete by \cite[Theorem 4]{BM03}. Therefore,
$A$ is not strongly derived unbounded.

(2)$\Rightarrow$(3):
Suppose there exists an integer
$m\geq 1$, such that $C_m(\proj A)$ is of strongly unbounded type.
Then by Lemma \ref{lemma-dim-hr},
there is an increasing sequence $\{r_i \; | \; i \in \mathbb{N}\}
\subseteq \mathbb{N}$ and pairwise non-isomorphic objects
$\{P^{\bullet}_{ij}\; | \; i, j\in \mathbb{N} \}$ in
$C_m(\proj A)$ such that $\hr(P^{\bullet}_{ij})=r_i$.
Since the elements in $\{P^{\bullet}_{ij}\; | \; i, j\in \mathbb{N} \}$,
 seen as objects in $K^b(\proj A)$,
are also pairwise non-isomorphic indecomposables,
$K^b(\proj A)$ is strongly unbounded.

(3)$\Rightarrow$(1): Trivial.

(2)$\Rightarrow$(4): Suppose $C_m(\proj A)$ is of strongly unbounded type, by
Lemma \ref{lemma-dim-hr},
there is an increasing sequence $\{r_i \; | \; i \in \mathbb{N}\}
\subseteq \mathbb{N}$ and pairwise non-isomorphic complexes
$\{P^{\bullet}_{ij}\; | \; i, j\in \mathbb{N} \}$ in
$C_m(\proj A)$ such that $\hr(P^{\bullet}_{ij})=r_i$.
Note that $\{P^{\bullet}_{ij}\; | \; i, j\in \mathbb{N} \}$
are pairwise non-isomorphic indecomposables viewed as objects
in $D^b(A)$ by Lemma \ref{lemma-iso}. Assume
$\{S_a\;|\; a\in A\}$ and $\{S_h\;|\; h\in \hat{A}\}$ are
the sets of all simple
$A$-modules and $\hat{A}$-modules respectively.

Now we consider the full embedding
$F: D^b(A)\rightarrow \underline{\mod} \hat{A}$.
On one hand, for a fixed object
$X^{\bullet}\in D^b(A)$, without loss of generality, we assume it concentrated
in degree $[0, n]$. Note that $X^{\bullet}$ is
generated by the cohomologies via triangles and
the comhomologies can be also obtained by triangles with the simples.
Since $F$ sends a triangle in $D^b(A)$ to a triangle in $\underline{\mod} \hat{A}$,
by the additivity of dimension functor $\mathbf{dim} (-)$
in $\underline{\mod} \hat{A}$, we have the
following estimate (see also \cite{Vo01})
$$\mathbf{dim} F(X^{\bullet})\leq \sum_{a\in A}\sum_{l=0}^n
\dim H^l(X^{\bullet})(a)\cdot \mathbf{dim} F(S_a[l])
\leq \hr(X^{\bullet})\cdot\sum_{a\in A}\sum_{l=0}^n
\mathbf{dim} F(S_a[l]).$$
Set $\mathbf{d}=\sum_{a\in A}\sum_{l=0}^n
\mathbf{dim} F(S_a[l])$. Then for any $i,j\in\mathbb{N}$, $\mathbf{dim}
F(P_{ij}^{\bullet})\leq r_i\cdot\mathbf{d}.$

On the other hand, for any object
$X^{\bullet}\in D^b(A)$, we have
$$\begin{aligned} \hr(X^{\bullet})
=&\sum_{l\in \mathbb{Z}}\dim H^l(X^{\bullet})
= \sum_{l\in \mathbb{Z}}\dim \Hom_{D^b(A)}(A[l], X^{\bullet})\\
=& \sum_{l\in \mathbb{Z}}\dim \Hom_{\hat{A}}(F(A[l]), F(X^{\bullet})) \\
\leq & \sum_{l\in \mathbb{Z}} \sum_{h\in\hat{A}}c_h(F(A[l]))\dim \Hom_{\hat{A}}(S_h, F(X^{\bullet}))\\
\leq & \sum_{l\in \mathbb{Z}} \sum_{h\in\hat{A}}c_h(F(A[l]))\dim \Hom_{\hat{A}}(P_h, F(X^{\bullet}))\\
\leq &\sum_{l\in \mathbb{Z}}\sum_{h\in\hat{A}}c_h(F(A[l]))\dim F(X^{\bullet})(h),
\end{aligned}$$
where $c_h(F(A[l]))$ denotes the
the number of composition factors of $F(A[l])$ isomorphic to $S_h$, and
$P_h$ is the indecomposable projective $\hat{A}$-module associated to
$h\in \hat{A}$.
Set $c=\sup\{c_h(F(A[l])) \;|\; 0\leq l\leq m,  h\in \hat{A}\}$.
Then for any $i,j\in \mathbb{N}$, 
$\hr(P_{ij}^{\bullet})\leq c\cdot \hw(P_{ij}^{\bullet})\cdot\dim F(P_{ij}^{\bullet})$. 
Thus $\frac{1}{c}\cdot \hl(P_{ij}^{\bullet})\leq \dim F(P_{ij}^{\bullet})$.

To prove $\hat{A}$ is of strongly unbounded representation type, we shall
find inductively infinitely vectors
$\{\mathbf{d_i} \; | \; i \in \mathbb{N}\} $ and
infinitely many indecomposable objects $\{M_{ij}^{\bullet}\in \mod \hat{A}
\; | \; i, j \in \mathbb{N}\}$ which are pairwise different up to
isomorphism such that $\mathbf{dim} M_{ij}^{\bullet} = \mathbf{d_i}$ for
all $j \in \mathbb{N}$. For $i=1$,
we have $0<\mathbf{dim} F(P^{\bullet}_{1j})\leq r_1\cdot \mathbf{d}$.
Then there is $0<\mathbf{d_1}\leq r_1\cdot \mathbf{d}$ and
infinitely many indecomposable objects $\{M_{1j} \; | \; j \in
\mathbb{N}\} \subseteq \{F(P_{1j}^{\bullet}) \; | \; j \in
\mathbb{N}\} $ of dimension vector $\mathbf{d_1}$.
Assume that we have done for $i$, and
$d_i=\sum_{j\in\mathbb{Z}}(\mathbf{d_i})_j$. Then we choose some $r_l$
with $r_l > c(m+1) \cdot d_i$, and thus $\hl(P_{lj}^{\bullet})>c\cdot d_i$. Since
$d_i < \frac{1}{c} \cdot \hl(P_{lj}^{\bullet})
\leq \dim(F(P_{lj}^{\bullet}))$, and
$\mathbf{dim}
F(P_{lj}^{\bullet})\leq r_l\cdot\mathbf{d},$
we can choose a vector $\mathbf{d_{i+1}}$, which is different from
$\{\mathbf{d_s} \; |\; s=1, 2,\cdots, i\}$, such that
$\mathbf{d_{i+1}} \leq  r_l\cdot\mathbf{d}$,  and
infinitely many pairwise non-isomorphism indecomposable objects $\{M_{i+1,j} \; |
\; j \in \mathbb{N}\} \subseteq \{F(P_{lj}^{\bullet}) \; | \; j \in
\mathbb{N}\}$ with $\mathbf{dim} M_{i+1,j} = \mathbf{d_{i+1}}$ for all
$j \in \mathbb{N}$.

(4)$\Rightarrow$(1): Suppose $A$ is not strongly derived unbounded, then by \cite[Theorem 2]{HZ13},
$A$ is derived discrete. Thus $\hat{A}$ is representation
discrete by Lemma \ref{lemma-voss}, which is a contradiction with the
assumption.
\end{proof}

Recall from \cite{Bau07}, for an algebra $A$ and a fixed integer $m$,
the category $C_m(\proj A)$ is said to be
{\it of finite representation type} if $C_m(\proj A)$ contains only finitely
many indecomposables up to isomorphisms. As a corollary of the previous
theorem, we obtain the dichotomy on the representation type of $C_m(\proj A)$,
$K^b(\proj A)$ and also the repetitive algebra $\hat{A}$.

\begin{corollary}
Let $A$ be an algebra. Then we have

{\rm (1)} $C_m(\proj A)$ is of finite representation type for any $m$, or there exists an integer
$m'\geq 1$, such that $C_{m'}(\proj A)$ is of strongly unbounded type.

{\rm (2)} $K^b(\proj A)$ is either discrete or of strongly unbounded type;

{\rm (3)} The repetitive algebra $\hat{A}$ is either of discrete representation type
or strongly unbounded representation type.
\end{corollary}

\begin{proof} By \cite[Theorem 2.4(1)]{Bau07}, we know that
$A$ is derived discrete if and only if any $C_m(\proj A)$ is
of finite representation type. Moreover,
$A$ is strongly derived unbounded if and only if
there exists an integer
$m\geq 1$, such that $C_m(\proj A)$ is of strongly unbounded type
by the previous theorem. Since any algebra $A$ is
either derived discrete or strongly derived unbounded
by \cite[Theorem 2]{HZ13}, the statement (1) follows.
Similarly, the statements (2) and (3) hold by the Lemma \ref{lemma-voss} and the
previous theorem.
\end{proof}

\end{document}